\documentclass[11pt]{amsart}

\usepackage{amsmath,amscd}
\usepackage{amssymb}
\usepackage{amsthm}
\usepackage{verbatim}
\usepackage[utf8]{inputenc}
\usepackage{mathtools}                          

\newtheorem{Theorem}{Theorem}[section]
\newtheorem*{TheoremNoNumber}{Theorem}
\newtheorem{Lemma}[Theorem]{Lemma}

\newtheorem*{Notational convention}{Notational convention}

\newtheorem{TheoremIntro}{Theorem}

\numberwithin{equation}{section}

\newcommand{\mR}{\mathbb{R}}                    
\newcommand{\norm}[1]{\lVert #1 \rVert}         
\newcommand{\br}[1]{\langle #1 \rangle}         

\newcommand{\mdef}{\coloneqq}

\newcommand{\doo}{\partial}
\newcommand{\bigO}{O}

\newcounter{sidenote}
\setlength{\marginparwidth}{.8in}

\newcommand{\e}{\text{e}}
\DeclareMathOperator{\Vol}{Vol}

\newcommand{\grad}{\nabla}
\DeclareMathOperator{\Hess}{Hess}
\newcommand{\I}{\mathbb{I}}

\newcommand{\escp}[1]{\mathcal{E}_{#1}}

\newcommand{\classP}[1]{P_{#1}}
\newcommand{\classPd}[1]{P^1_{#1}}

\newcommand{\classE}[1]{E_{#1}}
\newcommand{\classEd}[1]{E^1_{#1}}

\newcommand{\rot}[2]{\e^{i{#2}}{#1}}
\newcommand{\rotn}[2]{\e^{-i{#2}}{#1}}

\newcommand{\ball}[2]{B_{#1}({#2})}
\newcommand{\sphere}[2]{S_{#1}({#2})}

\newcommand{\ho}{o}

\hyphenation{ge-o-de-sics}
\begin{document}

\title[The geodesic ray transform on two-dimensional CH-manifolds]{The geodesic ray transform on two-dimensional
Cartan-Hadamard manifolds} 

\author[J. Lehtonen]{Jere Lehtonen}
\address{Department of Mathematics and Statistics, University of Jyv\"askyl\"a}
\email{jere.ta.lehtonen@jyu.fi}


\date{}
\begin{abstract}
We prove two injectivity theorems for
the geodesic ray transform on
two-dimensional, complete, simply connected Riemannian
manifolds with non-positive Gaussian curvature, also
known as Cartan-Hadamard manifolds. The first
theorem is concerned with bounded non-positive curvature and the second
with decaying non-positive curvature.
\end{abstract}

\maketitle

\section{Introduction and statement of main results}
In \cite{Hel99} Helgason presents the following result:
Suppose that $f$ is  a continuous function in $\mR^2$,
$|f(x)| \leq C(1+|x|)^{-\eta}$ for some $\eta > 2$, and
$Rf = 0$ where $Rf$ is the Radon transform defined by
$$
  Rf(x,\omega) \mdef \int_\mR f(x+t\omega) \, dt
$$
for $x \in \mR^2$ and $\omega \in S^1$.
Then $f = 0$. Since the operator $R$ is linear this corresponds to
the injectivity of the operator.
This result was later improved by Jensen \cite{Jen04}
requiring that $f = \bigO(|x|^{-\eta}), \eta > 1$.

In \cite{Hel94} Helgason presents a similar injectivity result for
the hyperbolic 2-space $H^2$: Suppose $f$ is a continuous function on $H^2$
such that $|f(x)| \leq C \e^{- d_g(x,\ho)}$, where $\ho$ is a fixed point
in $H^2$, and
$$
  \int_{\gamma} f \, ds = 0
$$
for every geodesic $\gamma$ of $H^2$.
Then $f = 0$.

The previous results are concerned with constant curvature spaces. There
are many related results for Radon type transforms on constant curvature
spaces and noncompact homogeneous spaces, see \cite{Hel99},\cite{Hel13}.
These types of spaces possess many symmetries. On the other hand,
there is also a substantial literature related to geodesic ray transforms
on Riemannian manifolds, see e.g. \cite{Muk77}, \cite{Sha94}, \cite{PSU14}.
Here the symmetry assumptions are replaced by curvature or
conjugate points conditions, but the spaces are required to be compact
with boundary.

In this paper we present injectivity results on two-dimensional, complete,
simply connected Riemannian manifolds with non-positive Gaussian
curvature. Such manifolds are called Cartan-Hadamard manifolds, and they
are diffeomorphic to $\mathbb{R}^2$ (hence non-compact) but do not
necessarily have symmetries. In order to prove our results we extend
energy estimate methods used in \cite{PSU13} to the non-compact case.

Suppose $(M,g)$ is such a manifold and we have a continuous function
$f \colon M \to \mR$.
We define the geodesic ray transform $If \colon SM \to \mR$ of the function
$f$ as
$$
  If(x,v) \mdef \int_{-\infty}^\infty f(\gamma_{x,v}(t)) \, dt,
$$
where the unit tangent bundle $SM$ is defined as
$$
  SM \mdef \{(x,v) \in TM \colon |v|_g = 1\}
$$
and $\gamma_{x,v}$ is the unit speed geodesic with
$\gamma_{x,v}(0) = x$ and $\gamma_{x,v}'(0) = v$.
Since we are working on non-compact
manifolds the geodesic ray transform
is not well defined for all continuous functions.
We need to impose decay requirements for the functions
under consideration. Because of the techniques
used we will also impose decay requirements for the first derivatives
of the function.

We denote by $C_0(M)$ the set of functions $f \in C(M)$ such
that for some $p \in M$ one has $f(x) \to 0$ as $d(p,x) \to \infty$.
Suppose $p \in M$ and $\eta \in \mR$.
We define
\begin{align*}
  \classP{\eta}(p,M) &\mdef \{
  f \in C(M) \colon |f(x)| \leq C(1+d_g(x,p))^{-\eta} \text{ for all $x \in M$}\},\\
  \classPd{\eta}(p,M) &\mdef \{
  f \in C^1(M) \colon |\grad f|_g \in \classP{\eta+1}(p,M)\}
  \cap C_0(M).
\end{align*}
and similarly
\begin{align*}
  \classE{\eta}(p,M) &\mdef \{
  f \in C(M) \colon |f(x)| \leq C\e^{-\eta d_g(x,p)} \text{ for all $x \in M$}\},\\
  \classEd{\eta}(p,M) &\mdef \{
  f \in C^1(M) \colon |\grad f|_g \in \classE{\eta}(p,M)\}
  \cap C_0(M).
\end{align*}

For all $\eta > 0$
we have inclusions
$$
  \classPd{\eta}(p,M) \subset \classP{\eta}(p,M)
$$
and
$$
  \classEd{\eta}(p,M) \subset \classE{\eta}(p,M),
$$
which can be seen by using Lemma \ref{escaping direction}, equation \eqref{geodesic distance}
and the fundamental theorem of calculus. In addition
$$
  \classE{\eta_1}(p,M) \subset \classP{\eta_2}(p,M)
$$
for all $\eta_1,\eta_2 > 0$.

We can now state our first injectivity theorem.
\begin{TheoremIntro}\label{theorem 1}
Suppose $(M,g)$ is a two-dimensional, complete, simply connected 
Riemannian manifold
whose Gaussian curvature satisfies $-K_0 \leq K(x) \leq 0$
for some $K_0$.
Then the geodesic ray transform is injective
on the set $\classEd{\eta}(M) \cap C^2(M)$
for $\eta > \frac{5}{2}\sqrt{K_0}$.
\end{TheoremIntro}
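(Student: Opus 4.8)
The plan is to adapt the Pestov-identity energy method of \cite{PSU13} to the non-compact setting by a cut-off/exhaustion argument. Fix $f \in \classEd{\eta}(M) \cap C^2(M)$ with $If = 0$ and set $u(x,v) \mdef \int_0^\infty f(\gamma_{x,v}(t))\,dt$. The decay of $f$ guarantees that $u$ is well defined; the hypotheses $f\in C^1$ with decaying gradient ensure that $u$ and its first-order derivatives are continuous, while the extra assumption $f\in C^2$ provides the regularity needed to justify the Pestov identity for the cut-off function introduced below. Writing $X, V, X_\perp$ for the geodesic, vertical and horizontal vector fields on $SM$, the function $u$ solves the transport equation $Xu = -f$. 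Two structural facts drive the argument: since $f$ is the pullback of a function on $M$ we have $Vf = 0$, whence $VXu = -Vf = 0$; and since $If = 0$ we have the antisymmetry $u(x,-v) = -u(x,v)$, which passes to $Vu$ and $X_\perp u$ as well.

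First I would record the Pestov identity for a compactly supported $w$, namely $\norm{VXw}^2 = \norm{XVw}^2 - (K\,Vw, Vw) + \norm{Xw}^2$ in $L^2(SM)$ with the Liouville measure, where all boundary contributions vanish by compact support. Because $K \le 0$, both $-\,(K\,Vw,Vw)$ and $-\norm{XVw}^2$ carry a favorable sign, so the identity yields the one-sided estimate $\norm{Xw}^2 \le \norm{VXw}^2$. I would apply this to $w = \chi_R u$, where $\chi_R$ is a radial cut-off equal to $1$ on $\ball{R}{p}$ and vanishing outside $\ball{2R}{p}$, with $\abs{X\chi_R}, \abs{X_\perp \chi_R} \lesssim 1/R$ supported in the annulus $A_R \mdef \ball{2R}{p} \setminus \ball{R}{p}$. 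Since $V\chi_R = 0$ and $VXu=0$, one computes $VX(\chi_R u) = (X_\perp\chi_R)\,u + (X\chi_R)\,Vu$, which is supported in $SA_R$ and involves only first-order derivatives of $u$, while $X(\chi_R u) = -f$ on $S\ball{R}{p}$. Feeding this into $\norm{Xw}^2 \le \norm{VXw}^2$ gives
\[
  \norm{f}_{L^2(S\ball{R}{p})}^2 \;\le\; \frac{C}{R^2}\int_{SA_R}\bigl(\abs{u}^2 + \abs{Vu}^2\bigr)\,d\Sigma .
\]
As $R \to \infty$ the left-hand side tends to $\norm{f}_{L^2(SM)}^2$, so it suffices to prove that the right-hand side vanishes in the limit.

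This last step is the main obstacle, and it is where the decay rate enters. I would obtain pointwise bounds at a point $x$ with $d_g(x,p) = r$ as follows. By the antisymmetry it is enough to estimate along directions for which the geodesic recedes from $p$; since the curvature is non-positive, a hinge comparison with the Euclidean plane then gives the uniform lower bound $d_g(\gamma_{x,v}(t),p) \ge \sqrt{r^2 + t^2}$ for all $t\ge 0$. Differentiating $u$ vertically produces a Jacobi field $J$ with $J(0)=0$, and the lower curvature bound $K \ge -K_0$ yields $\abs{J(t)} \le \sinh(\sqrt{K_0}\,t)/\sqrt{K_0}$; hence
\[
  \abs{Vu(x,v)} \;\lesssim\; \int_0^\infty \e^{-\eta\sqrt{r^2+t^2}}\,\e^{\sqrt{K_0}\,t}\,dt ,
\]
with an analogous (better) bound for $\abs{u}$ in which the Jacobi factor is absent. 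Estimating these integrals and integrating the resulting pointwise bounds over the fibre together with the volume growth $\Vol(\sphere{r}{p}) \lesssim \e^{\sqrt{K_0}\,r}$ leaves a factor of the form $\e^{(\sqrt{K_0}-c(\eta,K_0))\,r}$ on the annulus. The competition between the exponential growth of $J$, the exponential volume growth, and the decay of $f$ and $\grad f$ is exactly what forces a threshold of the form $\eta > c\sqrt{K_0}$; a careful accounting of the constants yields the stated sufficient condition $\eta > \frac{5}{2}\sqrt{K_0}$, under which the right-hand side of the displayed inequality tends to $0$. Then $\norm{f}_{L^2(SM)}=0$, and continuity of $f$ gives $f\equiv 0$. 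The delicate points to verify carefully are the uniformity of the Jacobi and comparison estimates as the direction ranges over each fibre, and the regularity of $u$ needed to legitimately apply the Pestov identity to $\chi_R u$.
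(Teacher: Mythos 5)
Your proposal is correct in substance and reaches the right threshold, but it handles the non-compactness by a genuinely different device than the paper. The paper applies the Pestov identity \emph{with boundary terms} on the exhausting bundles $SM_{p,r}$ (Lemma \ref{pestov}) and kills the two boundary pairings $\br{\br{v,\nu}Vu,X_\perp u}$ and $\br{\br{v_\perp,\nu}Vu,Xu}$ as $r\to\infty$ using the decay of $Vu^f$, $X_\perp u^f$ and the volume bound $\Vol\sphere{p}{r}\leq C\e^{\sqrt{K_0}r}$; you instead apply the boundary-free identity to $\chi_R u^f$ and kill the commutator terms $(X_\perp\chi_R)u^f+(X\chi_R)Vu^f$ supported in the annulus. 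Both routes produce the same exponent budget $2(2\sqrt{K_0}-\eta)+\sqrt{K_0}<0$, i.e.\ $\eta>\tfrac52\sqrt{K_0}$; your version has the minor advantage of needing only $u^f$ and $Vu^f$ (not $X_\perp u^f$) in the error term, and your hinge-comparison bound $d_g(\gamma_{x,v}(t),p)\geq\sqrt{r^2+t^2}$ for escaping geodesics is correct (the escaping condition forces the hinge angle at $x$ to be at least $\pi/2$) and slightly sharper than the triangle-inequality estimate \eqref{geodesic distance estimate} the paper uses; either suffices.

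The one substantive gap is the step you flag but do not resolve: applying the Pestov identity to $\chi_R u^f$ requires $\chi_R u^f\in C^2(SM)$, and your remark that ``$f\in C^2$ provides the regularity needed'' is not justified as stated. The hypotheses control $|f|$ and $|\grad f|$ but not $\Hess f$, so differentiating twice under the integral sign (which brings in $\Hess f$ paired with products of Jacobi fields, plus derivatives of Jacobi data) is not obviously legitimate, and indeed the paper never proves $u^f\in C^2$ for such $f$. It only proves $u^h\in C^2(SM)$ for \emph{compactly supported} $C^2$ functions $h$ (Lemma \ref{C^2 for compactly supported}), and then establishes the Pestov identity for $u^f$ by approximating $f$ with $\varphi_k f$ and proving $L^2$- and $L^\infty$-convergence of every term (Lemma \ref{pestov for uf}). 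Your cutoff formulation needs exactly the same approximation argument (note that $\chi_R u^f\neq u^{\chi_R f}$, so the cutoff on $SM$ does not by itself restore regularity), so this step is real work rather than a routine verification; with it supplied, your proof closes.
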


The second theorem considers the case of suitably decaying
Gaussian curvature. By imposing decay requirements for the
Gaussian curvature we are able to relax the decay requirements of
the functions we are considering.

\begin{TheoremIntro}\label{theorem 2}
Suppose $(M,g)$ is a two-dimensional, complete, simply connected 
Riemannian manifold of non-positive
Gaussian curvature $K$ such that
$K \in \classP{\tilde\eta}(p,M)$ for some $\tilde\eta > 2$ and $p \in M$.
Then the geodesic ray transform is injective
on set $\classPd{\eta}(p,M) \cap C^2(M)$ for $\eta > \frac{3}{2}$.
\end{TheoremIntro}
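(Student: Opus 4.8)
The plan is to transplant the energy-estimate (Pestov identity) method of \cite{PSU13} from the compact-with-boundary setting to the non-compact manifold $M$, letting the decay hypotheses play the role previously played by compactness. Assume $If = 0$ and fix $f \in \classPd{\eta}(p,M) \cap C^2(M)$. Define the forward solution of the transport equation on $SM$,
$$
  u(x,v) \mdef \int_0^\infty f(\gamma_{x,v}(t)) \, dt .
$$
Since $M$ is Cartan-Hadamard every geodesic escapes to infinity and $d_g(\gamma_{x,v}(t),p)$ grows at least linearly in $\abs{t}$ (Lemma \ref{escaping direction} and \eqref{geodesic distance}); together with $f \in \classP{\eta}(p,M)$ for $\eta > \tfrac32 > 1$ this makes the integral absolutely convergent, so $u$ is well defined, lies in $C^2(SM)$ (smooth dependence of the geodesic flow on initial conditions, plus $f \in C^2$), and satisfies $Xu = -f$. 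The hypothesis $If = 0$ is exactly what turns $u$ into a globally well-behaved first integral: it forces $u(x,v) = -\int_{-\infty}^0 f(\gamma_{x,v}(t))\,dt$ as well, so $u$ decays along geodesics in both time directions.

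Next I would recall the first-order frame $\{X, X_\perp, V\}$ on $SM$ with structure equations $[X,V] = X_\perp$, $[V,X_\perp] = X$, $[X,X_\perp] = -KV$, together with the Pestov identity
$$
  \norm{VXu}_{L^2(SM)}^2
  = \norm{XVu}_{L^2(SM)}^2
  - \int_{SM} K\,\abs{Vu}^2
  + \norm{Xu}_{L^2(SM)}^2 ,
$$
which on a closed surface follows from the structure equations by integration by parts. Granting this identity for $u$, the conclusion is immediate. Because $f$ is the pullback of a function on $M$ it is independent of the fibre variable, so $Vf = 0$ and hence $VXu = -Vf = 0$; since $K \leq 0$ the curvature term is non-negative, and the identity collapses to
$$
  0 \;=\; \norm{XVu}^2 - \int_{SM} K\,\abs{Vu}^2 + \norm{Xu}^2 \;\geq\; \norm{Xu}^2 ,
$$
a sum of three non-negative terms vanishing. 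In particular $\norm{Xu}^2 = 0$, i.e. $f = 0$.

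The real content, and the step I expect to be the main obstacle, is justifying the Pestov identity on the non-compact $SM$ with no surviving boundary terms at infinity. I would exhaust $M$ by geodesic balls $\ball{R}{p}$, carry out the integration by parts over $S\ball{R}{p}$, and prove that each boundary contribution over $S\sphere{R}{p}$ — products of $u$, $Vu$ and their first derivatives — tends to $0$ as $R \to \infty$, while simultaneously verifying that every $L^2(SM)$ norm in the identity is finite. Both reductions rest on decay estimates for $u$ and its derivatives $Vu$, $Xu$, $X_\perp u$, $XVu$, and the crucial ones concern the angular derivative: differentiating $u$ in $v$ brings down a Jacobi field along $\gamma_{x,v}$, whose growth is governed by $K$. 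This is where $K \in \classP{\tilde\eta}(p,M)$ with $\tilde\eta > 2$ is decisive, since the curvature then decays fast enough to be integrable along geodesics and the Jacobi fields grow at most linearly, as in the flat case, so that
$$
  \abs{Vu(x,v)} \lesssim \int_0^\infty t\,\abs{\grad f}(\gamma_{x,v}(t))\,dt
$$
converges and decays; combined with $\abs{\grad f} \in \classP{\eta+1}(p,M)$ this is what yields both the $L^2$ bounds and the vanishing of the boundary flux, precisely at the threshold $\eta > \tfrac32$. I expect this Jacobi-field bookkeeping — controlling integrability and boundary flux simultaneously in terms of the two exponents $\tilde\eta$ and $\eta$ — to be the heart of the argument, with the Pestov computation itself essentially formal once the estimates are in place. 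Theorem \ref{theorem 1} is then proved by the same scheme, the role of integrable curvature being replaced by the bound $-K_0 \leq K \leq 0$, which only forces Jacobi fields to grow like $\e^{\sqrt{K_0}\,t}$ and thereby dictates the exponential threshold $\eta > \tfrac52\sqrt{K_0}$.
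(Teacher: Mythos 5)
Your overall architecture is exactly the paper's: define $u^f(x,v)=\int_0^\infty f(\gamma_{x,v}(t))\,dt$, reduce to the transport equation $Xu^f=-f$, apply the Pestov identity with boundary terms on the exhausting balls $SM_{p,r}$, and kill the boundary flux as $r\to\infty$ using decay of $Vu^f$ and $X_\perp u^f$, which in turn rests on the linear (rather than exponential) growth of the Jacobi fields $J_p,J_h$ guaranteed by $K\in\classP{\tilde\eta}(p,M)$, $\tilde\eta>2$. You also correctly locate where the threshold $\eta>\tfrac32$ comes from: $|Vu^f|,|X_\perp u^f|\lesssim(1+d(x,p))^{-(\eta-1)}$ against $\Vol \sphere{p}{r}\lesssim r$ gives a boundary term of order $r^{3-2\eta}$.

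There is one step where your justification as written would fail: the claim that $u^f\in C^2(SM)$ follows from ``smooth dependence of the geodesic flow on initial conditions, plus $f\in C^2$.'' Differentiating twice under the improper integral brings down terms of the form $\Hess f(J,\tilde J)$ and $\br{\grad f, D_sJ}$ integrated over $[0,\infty)$, and the class $\classPd{\eta}(p,M)$ controls only $|f|$ and $|\grad f|$ --- there is no decay hypothesis on $\Hess f$, so you have no dominating function and no way to conclude convergence of these integrals, let alone continuity of the second derivatives. The paper sidesteps this deliberately: it only ever proves $C^1$ bounds for $u^f$ (Lemma \ref{derivative estimates}), establishes $u^{f_k}\in C^2(SM)$ for \emph{compactly supported} $C^2$ approximants $f_k=\varphi_k f$ (Lemma \ref{C^2 for compactly supported}, where compact support makes the dominated convergence trivial), applies the Pestov identity to each $u^{f_k}$, and passes to the limit term by term --- which is possible because every term in the identity involves at most first derivatives of $u$. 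Your argument needs this approximation layer (or an added hypothesis on $\Hess f$) to be complete; with it inserted, the proof goes through as you describe.
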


One question arising is of course the existence of manifolds satisfying the
restrictions of the theorems.
By the Cartan-Hadamard theorem such manifolds are always diffeomorphic
with the plane $\mR^2$ so the question is what kind of Gaussian curvatures
we can have on $\mR^2$ endowed with a complete Riemamnian metric?
The following theorem by Kazdan and Warner \cite{KW74} answers this:
\begin{TheoremNoNumber}
Let $K \in C^\infty(\mR^2)$. A necessary and sufficient condition
for there to exist a complete Riemannian metric on $\mR^2$ with
Gaussian curvature $K$ is that
\[
  \lim_{r \to \infty} \inf_{|x| \geq r} K(x) \leq 0.
\]
\end{TheoremNoNumber}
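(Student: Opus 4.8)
The plan is to prove both directions by reducing this geometric existence problem to a semilinear elliptic equation. Since $\mathbb{R}^2$ is simply connected I would look for the metric inside the conformal class of the Euclidean metric $g_0$, writing $g = \e^{2u}g_0$. In two dimensions the Gaussian curvature of such a metric is
$$
  K_g = -\e^{-2u}\,\Delta u,
$$
where $\Delta = \partial_x^2 + \partial_y^2$ is the flat Laplacian, so prescribing $K_g = K$ is equivalent to solving the prescribed-curvature equation
$$
  \Delta u = -K\,\e^{2u} \qquad \text{on } \mathbb{R}^2.
$$
Moreover $g$ is complete as soon as $u$ is bounded below: then $\e^{2u}\geq\e^{-2C}$, so $g \geq \e^{-2C}g_0$, and completeness is inherited from the Euclidean metric. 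Thus everything reduces to showing the equation is unsolvable (compatibly with completeness) when $\liminf K>0$, and producing a solution with a lower bound when $\liminf_{|x|\to\infty}K\leq 0$.

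\emph{Necessity.} I argue by contraposition. If $\lim_{r\to\infty}\inf_{|x|\geq r}K(x) > 0$ then, since $\inf_{|x|\geq r}K$ is nondecreasing, there are $r_0$ and $\delta>0$ with $K\geq\delta$ on $\{|x|\geq r_0\}$; as the metric and Euclidean topologies coincide, this says $K\geq\delta$ outside a $g$-compact set. Suppose a complete metric with this curvature existed. Being complete and noncompact, $(M,g)$ admits a minimizing geodesic ray $\gamma\colon[0,\infty)\to M$ by Hopf--Rinow, and $\gamma$ is proper, so $\gamma(t)\in\{|x|>r_0\}$ for all $t\geq T$. On $[T,\,T+\pi/\sqrt\delta]$ the curvature along $\gamma$ is $\geq\delta$, so the Sturm comparison theorem forces a conjugate point at parameter $\leq T+\pi/\sqrt\delta$, and $\gamma$ cannot minimize beyond it. This contradicts the minimality of the ray, so no such metric exists and the condition is necessary.

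\emph{Sufficiency.} This is the analytic heart. I would exhaust $\mathbb{R}^2$ by disks $B_n$ and solve the Dirichlet problem $\Delta u_n = -K\,\e^{2u_n}$ on each $B_n$ by the method of sub- and supersolutions: a subsolution $\underline u$ with $\Delta\underline u + K\e^{2\underline u}\geq 0$ and a supersolution $\overline u$ with $\Delta\overline u + K\e^{2\overline u}\leq 0$ satisfying $\underline u\leq\overline u$ yield, by monotone iteration, solutions with $\underline u\leq u_n\leq\overline u$. Interior Schauder estimates then bound the $u_n$ in $C^{2,\alpha}_{\mathrm{loc}}$, and a diagonal argument extracts a subsequence converging in $C^2_{\mathrm{loc}}$ to a global solution $u$ on $\mathbb{R}^2$ with $\underline u\leq u\leq\overline u$; if $\underline u$ is bounded below, the associated metric is complete.

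\emph{The main obstacle} is the construction of the global barriers $\underline u\leq\overline u$, which is exactly where $\liminf K\leq 0$ must be used. Where $K\leq 0$ the equation is benign, since $-K\e^{2u}\geq 0$ makes constant-type subsolutions and superharmonic supersolutions available, and any nonpositive curvature is classically realizable with $u$ bounded below. The difficulty is the set where $K>0$: there the supersolution must be concave enough to dominate $K\e^{2\overline u}$, and the necessity argument shows this is impossible if $K$ stays uniformly positive at infinity. The hypothesis provides, along a sequence $R_j\to\infty$, circles on which $K$ is essentially nonpositive, and I would exploit these as escape routes — cutting off the positive part of $K$ outside $B_{R_j}$, building barriers adapted to the sign of $K$, and letting $R_j\to\infty$ through the exhaustion. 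Preserving the lower bound on $u$ throughout this limit, so that completeness survives, is the delicate point, and the sharpness of the borderline value $\liminf\le 0$ (rather than $<0$) mirrors how tightly this barrier construction is tied to the geometry revealed in the necessity half.
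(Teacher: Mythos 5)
A preliminary remark: the paper does not prove this statement at all --- it is quoted from Kazdan and Warner \cite{KW74} as a known result --- so there is no internal proof to compare yours against, and I can only judge the proposal on its own terms. Your necessity argument is correct and essentially complete: a complete noncompact surface carries a minimizing ray, the ray is proper and hence eventually lies in the region where $K \geq \delta > 0$ (the metric and Euclidean topologies agree, so $\{|x|\leq r_0\}$ is compact for $g$ as well), and Sturm comparison for the scalar Jacobi equation $J'' + KJ = 0$ forces a conjugate point within parameter length $\pi/\sqrt{\delta}$, past which the ray cannot minimize. That is the standard argument and it works.

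The sufficiency half, however, is a program rather than a proof, and the one step you defer is the entire content of the theorem. The conformal reduction to $\Delta u = -K\e^{2u}$, the exhaustion by disks, monotone iteration between barriers, interior Schauder estimates and the diagonal argument are all routine \emph{once} global barriers $\underline{u} \leq \overline{u}$ with $\underline{u}$ bounded below are in hand; you explicitly do not construct them. Moreover, the mechanism you propose rests on a misreading of the hypothesis: $\lim_{r\to\infty}\inf_{|x|\geq r}K(x) \leq 0$ guarantees only a sequence of \emph{points} $x_j \to \infty$ at which $K(x_j)$ is nearly nonpositive, not ``circles on which $K$ is essentially nonpositive.'' Take $K \equiv 1$ outside a thin strip running to infinity, with $K$ negative inside the strip: the hypothesis holds, yet every circle $|x|=R$ meets $\{K=1\}$ except on a short arc, so the ``escape route'' barriers you describe simply do not exist. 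Handling exactly this situation --- curvature uniformly positive near infinity except along thin channels --- is where the real work of Kazdan and Warner lies, and nothing in the proposal addresses it. As written, the argument establishes necessity but not sufficiency.
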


Especially for every non-positive function $K \in C^\infty(\mR^2)$
there exists a metric on $\mR^2$ with Gaussian curvature $K$.

The case where the metric $g$ differs from the euclidean metric
$g_0$ only in some compact set and the Gaussian curvature is
everywhere non-positive is not interesting from the geometric
point of view. By a theorem of Green and Gulliver \cite{GG85} if the metric
$g$ differs from the euclidean metric $g_0$
at most on a compact set and there are no conjugate points, then
the manifold is isometric to $(\mR^2,g_0)$. Since non-positively
curved manifolds can not contain conjugate points this would be the case.

The problem of recovering a function from its integrals
over all lines in the plane goes back to Radon
\cite{Rad17}. He proved the injectivity of the
integral transform nowadays known as the Radon transform
and provided a reconstruction formula.

It is also worth mentioning a counterexample for injectivity
of the Radon transform provided by Zalcman \cite{Zal82}
He showed that on $\mR^2$ there exists a non-zero continuous
function which is $\bigO(|x|^{-2})$ along every line and integrates to zero over any line.
See also \cite{AG93},\cite{Arm94}.

This work is organized as follows.
In the second section we describe the geometrical
setting of this work and present some results mostly concerning
behaviour of geodesics. 
The third section is about the geodesic ray transform.
In the fourth section we derive estimates for the growth
of Jacobi fields in our setting and use those
to prove useful decay estimates.
The fifth section contains the proofs of our main theorems.

\begin{Notational convention}
Throughout this work
we denote by $C(a,b,\dots)$ (with a possible
subscript) a constant
depending on $a,b,\dots$ The value of the constant
may vary from line to line.
\end{Notational convention}

\subsection*{Acknowledgement}
This work is part of the PhD research of the author.
The author is partly supported by the Academy of Finland.
The author wishes to thank professor M.~Salo
for many helpful ideas and discussions regarding this work.
The author is also thankful for J.~Ilmavirta
for many insightful comments.

\section{The setting of this work and preliminaries}
Throughout this paper we assume $(M,g)$ to be a two-dimensional, complete,
simply connected manifold with non-positive Gaussian curvature
$K$. By the Cartan-Hadamard theorem the exponential map $\exp_x \colon
T_x M \to M$ is a diffeomorphism for every point $x \in M$ . Thereby we have global normal coordinates
centered at any point and we could equivalently work with $(\mR^2,\tilde g)$ where $\tilde g$
is pullback of the metric $g$ by exponential map, but we choose to present this work in
the general setting of $(M,g)$.

We make the standing assumption of unit-speed parametrization for
geodesics.
If $x \in M$ and $v \in T_xM$ is such that $|v|_g = 1$ we denote
by $\gamma_{x,v} \colon \mR \to M$ the geodesic with $\gamma_{x,v}(0) = x$
and $\gamma_{x,v}'(0) = v$.

The fact that for every point the exponential map
is a diffeomorphism implies that every pair of distinct points can be
joined by an unique geodesic. Furthermore, by using the triangle inequality, we have
\begin{equation}\label{geodesic distance}
  d_g(\gamma_{x,v}(t),p) \geq d_g(\gamma_{x,v}(t),x) - d_g(x,p)
  = |t| - d_g(x,p)
\end{equation}
for every $p \in M$ and $(x,v) \in SM$.

Because of the everywhere non-positive Gaussian curvature, the function
$t \mapsto d_g(\gamma(t),p)$ is convex on $\mR$ and the function
$t \mapsto d_g(\gamma(t),p)^2$ is strictly convex
on $\mR$ for every geodesic $\gamma$ and point $p \in M$ (see e.g.~\cite{Pet98}).

We say that the geodesic $\gamma_{x,v}$ is escaping with respect to point $p$
if function $t \mapsto d_g(\gamma_{x,v}(t),p)$ is strictly increasing
on the interval $[0,\infty)$. The set of such geodesics is denoted
by $\escp{p}(M)$.

\begin{Lemma}\label{escaping direction}
Let $p \in M$ and $(x,v) \in SM$. At least one of geodesics
$\gamma_{x,v}$ and $\gamma_{x,-v}$ is in set $\escp{p}(M)$.
\end{Lemma}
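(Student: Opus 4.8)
The plan is to reduce the statement to the elementary monotonicity behaviour of a strictly convex function of one real variable. Fix $p$ and $(x,v)$, and write $h(t) \mdef d_g(\gamma_{x,v}(t),p)$ for $t \in \mR$. Since $\gamma_{x,-v}(t) = \gamma_{x,v}(-t)$, we have $d_g(\gamma_{x,-v}(t),p) = h(-t)$, so that $\gamma_{x,-v} \in \escp{p}(M)$ is equivalent to $h$ being strictly decreasing on $(-\infty,0]$, while $\gamma_{x,v} \in \escp{p}(M)$ means exactly that $h$ is strictly increasing on $[0,\infty)$. Thus it suffices to prove that $h$ is either strictly increasing on $[0,\infty)$ or strictly decreasing on $(-\infty,0]$.

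To this end I would work with $f \mdef h^2$, which by the convexity statement recalled above is strictly convex on $\mR$. Because $\exp_p$ is a diffeomorphism, $f(t) = |\exp_p^{-1}(\gamma_{x,v}(t))|^2$ is smooth, so strict convexity gives that $f'$ is strictly increasing and hence vanishes at most once. This yields a clean trichotomy: either $f' > 0$ on all of $\mR$, or $f' < 0$ on all of $\mR$, or $f'$ has a unique zero $t_0$ with $f' < 0$ on $(-\infty,t_0)$ and $f' > 0$ on $(t_0,\infty)$. In the first case $f$ is strictly increasing on $[0,\infty)$; in the second it is strictly decreasing on $(-\infty,0]$; in the third, if $t_0 \leq 0$ then $f$ is strictly increasing on $[t_0,\infty) \supseteq [0,\infty)$, and if $t_0 \geq 0$ then $f$ is strictly decreasing on $(-\infty,t_0] \supseteq (-\infty,0]$. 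In every case $f$ satisfies one of the two desired monotonicity properties.

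Finally I would transfer these properties from $f$ to $h$. Since $h \geq 0$ and $s \mapsto \sqrt{s}$ is strictly increasing on $[0,\infty)$, the function $h = \sqrt{f}$ inherits the strict monotonicity of $f$ on any interval, so $h$ is strictly increasing on $[0,\infty)$ or strictly decreasing on $(-\infty,0]$, which is exactly what is needed. The argument involves no serious obstacle; the only points requiring a little care are working with the squared distance rather than the distance itself, which is what supplies genuine strictness, and the bookkeeping of the location of the minimizer $t_0$ relative to $0$ in the third case.
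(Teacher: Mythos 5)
Your proof is correct and follows essentially the same route as the paper: both rest on the strict convexity of $t \mapsto d_g(\gamma_{x,v}(t),p)^2$ along the geodesic and deduce that the distance must be strictly monotone on at least one of the two half-lines. If anything, your derivative trichotomy is slightly more careful than the paper's one-line argument, which infers a strict global minimum from strict convexity alone (this needs the coercivity $d_g(\gamma(t),p)\geq |t|-d_g(x,p)$ to be literally valid), whereas your case analysis also covers the possibility that $f'$ never vanishes.
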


\begin{proof}
The function $t \mapsto d_g(\gamma(t),p)^2$ is strictly convex
on $\mR$ so it has a strict global minimum. Therefore the
function $t \mapsto d_g(\gamma(t),p)$ also has a strict
global minimum, which implies that at least one of functions
$t \mapsto d_g(\gamma_{x,v}(t),p)$
and $t \mapsto d_g(\gamma_{x,-v}(t),p)$ is strictly increasing
on the interval $[0,\infty)$.
\end{proof}

If the geodesic $\gamma_{x,v}$ belongs to $\escp{p}(M)$
equation \eqref{geodesic distance} implies the estimate
\begin{equation}\label{geodesic distance estimate}
  d_g(\gamma_{x,v}(t),p) \geq
  \begin{cases}
    d_g(x,p),&\text{if }0\leq t \leq 2d_g(x,p),\\
    t - d_g(x,p),&\text{if }2d_g(x,p) < t.
  \end{cases}
\end{equation}

The manifold $M$ is two-dimensional and oriented
and so is also the tangent space $T_xM$ for every $x\in M$.
Thus given $v \in T_xM$ we can define $\rot{v}{t} \in T_xM, t \in \mR$,
to be the unit vector obtained by rotating the vector $v$ by an angle $t$.
We will use the shorthand notation $v_\perp \mdef \rotn{v}{\pi/2}$.

The unit tangent bundle $SM$ is a 3-dimensional manifold
and there is a natural Riemannian metric on it, namely
the Sasaki metric \cite{Pat99}. The volume form given by this metric
is denoted by $d\Sigma^3$.

On the manifold $SM$ we have the geodesic flow
$\varphi_t \colon SM \to SM$ defined by
$$
  \varphi_t(x,v) = (\gamma_{x,v}(t),\gamma_{x,v}'(t)).
$$
We denote by $X$ the
vector field associated with this flow.
We define flows $p_t,h_t \colon SM \to SM$ as
\begin{align*}
  &p_t(x,v) \mdef (x,\rot{v}{t}),\\
  &h_t(x,v) \mdef (\gamma_{x,v_\perp}(t),Z(t)),
\end{align*}
where $Z(t)$ is the parallel transport of the vector $v$
along the geodesic $\gamma_{x,v_\perp}$,
and denote the associated vector fields by $V$ and $X_\perp$.

These three vector fields form a global orthonormal frame for $T(SM)$
and we have following structural equations (see \cite{PSU13})
\begin{align*}
  [X,V] &= X_\perp,\\
  [V,X_\perp] &= X, \\
  [X,X_\perp] &= -KV,
\end{align*}
where $K$ is the Gaussian curvature of the manifold $M$.

Let $f \colon U \subset M \to \mR$ be
such that $|\grad f|_g = 1$. Then level sets
of the function $f$ are submanifolds of $M$.
The second fundamental form $\I$ on such
a level set is defined as 
$$
   \I(v,w) \mdef \Hess(f)(v,w),
$$
where $v,w \perp \grad f$ and $\Hess(f)$ is the covariant
Hessian (see \cite{Pet98}).

Suppose that $p \in M$.
Denote by $\ball{p}{r}$ the open geodesic ball
with radius $r$, and by $\sphere{p}{r}$ its boundary.

\begin{Lemma}\label{spheres strictly convex}
For every $p \in M$ and $r> 0$ the geodesic
ball $\ball{p}{r}$ has a strictly convex boundary,
i.e.~the second fundamental form of $\sphere{p}{r}$
is positive definite.
\end{Lemma}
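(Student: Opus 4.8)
The plan is to realize $\sphere{p}{r}$ as a level set of the distance function $\rho \mdef d_g(\cdot,p)$. On a Cartan--Hadamard manifold $\exp_p$ is a diffeomorphism, so there is no cut locus and $\rho$ is smooth on $M\setminus\{p\}$, satisfies $|\grad\rho|_g = 1$ there, and has the spheres $\sphere{p}{r}$ as its level sets. By the definition of $\I$ recalled above it therefore suffices to show $\Hess(\rho)(w,w) > 0$ for every nonzero $w$ tangent to $\sphere{p}{r}$, i.e.\ for $w \perp \grad\rho$.

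First I would express $\Hess(\rho)$ through Jacobi fields. Fix $q \in \sphere{p}{r}$ and let $\gamma\colon[0,r]\to M$ be the unit-speed radial geodesic with $\gamma(0)=p$, $\gamma(r)=q$, so that $\grad\rho = \gamma'$ along $\gamma$. Let $J$ be the Jacobi field along $\gamma$ with $J(0)=0$ and $J'(0)=w_0$, a unit vector orthogonal to $\gamma'(0)$; a short computation using $\gamma''=0$ and the Jacobi equation shows $J(t)\perp\gamma'(t)$ for all $t$, and $J$ is the variation field of a family of radial geodesics. Since $\grad\rho$ is the velocity field of this radial foliation, torsion-freeness gives $\grad_{\gamma'}J = \grad_J\grad\rho$, i.e.\ $J' = \grad_J\grad\rho$, whence
\[
  \Hess(\rho)(J(t),J(t)) = \langle \grad_{J}\grad\rho,\, J\rangle = \langle J'(t),J(t)\rangle = \tfrac12\,\tfrac{d}{dt}\,|J(t)|_g^2 .
\]
Because $M$ is two-dimensional, $J(r)$ spans the one-dimensional space $T_q\sphere{p}{r}$, so establishing positivity in this single direction will yield positive definiteness of $\I$.

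The core computation is the convexity of $f(t)\mdef|J(t)|_g^2$. Differentiating twice and using the Jacobi equation together with the two-dimensional identity $\langle R(J,\gamma')\gamma',J\rangle = K|J|_g^2$ (valid as $J\perp\gamma'$ and $|\gamma'|_g=1$) gives $f''(t) = 2|J'(t)|_g^2 - 2K(\gamma(t))|J(t)|_g^2 \geq 2|J'(t)|_g^2$, where the inequality uses $K \leq 0$. Thus $f$ is convex with $f(0)=0$ and $f'(0)=2\langle J'(0),J(0)\rangle = 0$. Integrating $f''\geq 2|J'|_g^2$ and noting $|J'(0)|_g = |w_0|_g = 1>0$ (so $|J'|_g^2>0$ near $0$), I get $f'(t)=\int_0^t f''(s)\,ds > 0$ for every $t>0$. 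In particular $\Hess(\rho)(J(r),J(r)) = \tfrac12 f'(r) > 0$, and since $f$ is then strictly increasing, $J(r)\neq 0$; this is exactly the claimed positive definiteness of $\I$ on $\sphere{p}{r}$.

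The steps are routine once the Jacobi-field expression for the Hessian is in place, so I expect the main point requiring care to be the identity $\grad_J\grad\rho = J'$, i.e.\ that the shape operator of the geodesic sphere acts on the variation field as covariant differentiation along $\gamma$. This rests on the smoothness of $\rho$ and the absence of conjugate points, which is precisely where the Cartan--Hadamard hypothesis enters. An alternative and equally valid route would be to invoke the Hessian comparison theorem, which for $K\leq 0$ gives $\Hess(\rho)(w,w)\geq \rho^{-1}|w|_g^2 > 0$ directly; I prefer the self-contained Jacobi-field argument, as it parallels the Jacobi-field estimates developed later in the paper.
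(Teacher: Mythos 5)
Your proof is correct, but it takes a genuinely different route from the paper's. The paper realizes $\sphere{p}{r}$ as a level set of $f = d_g(\cdot,p)$ exactly as you do, but then reduces everything to the strict convexity of $t \mapsto d_g(\gamma(t),p)^2$ along geodesics --- a fact already recorded in Section 2 with a citation to Petersen --- via the product identity $\Hess(f^2) = 2f\Hess f + 2\,df\otimes df$ and the observation that $df(v)=0$ in tangential directions, so that $\Hess(f^2)(v,v) = 2f\,\Hess f(v,v) > 0$. You instead prove the needed positivity from scratch: you span $T_q\sphere{p}{r}$ by the value $J(r)$ of a radial Jacobi field, identify $\Hess(\rho)(J,J)$ with $\tfrac12\frac{d}{dt}|J|_g^2$ via the symmetry identity $\grad_J\grad\rho = J'$, and then run the second-order ODE argument $(|J|^2)'' = 2|J'|^2 - 2K|J|^2 \geq 2|J'|^2$ with $K\le 0$. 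What the paper's version buys is brevity and reuse of an already-stated fact; what yours buys is self-containedness --- it essentially proves the convexity statement the paper cites, and it makes explicit where the Cartan--Hadamard hypotheses enter (smoothness of $\rho$ away from $p$, and $J(r)\neq 0$, i.e.\ absence of conjugate points). One small presentational point: you assert that $J(r)$ spans $T_q\sphere{p}{r}$ before establishing $J(r)\neq 0$; you do close that loop at the end, but it would read better to note $f(r) = \int_0^r f'(s)\,ds > 0$ at the moment you first use the spanning claim.
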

\begin{proof}
Suppose $x \in \sphere{p}{r}$ and $v$ is tangent to
$\sphere{p}{r}$ at $x$. Denote $f(y) = d_p(y,p)$.
We have
$$
  \Hess(f^2)(x) = 2f(x)\Hess f(x) + 2\,d_xf \otimes d_xf
$$
and thus
$$
  \Hess(f^2)(x)(v,v) = 2f(x)\Hess f(x)(v,v)
$$
since $d_xf(v) = \br{\grad f(x), v}_g = 0$.

Since the function $t \to d(\gamma_{x,v}(t),p)^2$ is strictly convex
we get
$$
  \Hess(f^2)(x)(v,v) = \frac{d^2}{dt^2}((f^2 \circ \gamma_{x,v})(t))\Big|_{t=0} > 0.
$$
Therefore $\Hess(f^2)$ is positive definite in tangential directions
and so is also $\Hess f$
\end{proof}

Equivalently, the boundary of $\ball{p}{r}$ is strictly convex if and only if
every geodesics starting from a boundary point in a direction
tangent to boundary stays outside $\ball{p}{r}$ for small positive
and negative times and has a second order contact at time $t=0$. From this we see that
if $x \in M$ and $v$ is tangent to $\sphere{p}{d_g(x,p)}$ then
function $t \mapsto d_g(\gamma_{x,v}(t),p)^2$ has a global minimum at 
$t = 0$.

\begin{Lemma}\label{vertical escaping}
Suppose $p \in M$ and $(x,v) \in SM$ is such that
$\gamma_{x,v} \in \escp{p}(M)$ and $v$ is not tangent
to $\sphere{p}{d(x,p)}$. Then $\gamma_{p_s(x,v)} \in \escp{p}(M)$ for
small $s$.

If $v$ is tangent then $\gamma_{p_t(x,v)} \in \escp{p}(M)$ for
either small $t > 0$ or small $t < 0$.
\end{Lemma}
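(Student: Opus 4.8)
The plan is to reduce the escaping condition at an arbitrary direction to a single first-order inequality at the base point, and then to analyze how that quantity varies under rotation of the direction. Throughout set $f \mdef d_g(\cdot,p)$ and assume $x \neq p$ (if $x=p$ every geodesic through $p$ is escaping and the tangency hypothesis is vacuous). For a fixed direction $w$, the function $\psi(t) \mdef (f^2 \circ \gamma_{x,w})(t) = d_g(\gamma_{x,w}(t),p)^2$ is strictly convex on $\mR$, and by \eqref{geodesic distance} it satisfies $\psi(t) \geq (|t|-d_g(x,p))^2 \to \infty$ as $t \to \pm\infty$. Hence $\psi$ attains a unique global minimum at a finite time $t_*$, is strictly decreasing on $(-\infty,t_*]$ and strictly increasing on $[t_*,\infty)$. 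Since $\sqrt{\cdot}$ is increasing, $d_g(\gamma_{x,w}(\cdot),p)=\sqrt{\psi}$ has the same monotonicity, so $\gamma_{x,w}$ is escaping, i.e.\ $d_g(\gamma_{x,w}(\cdot),p)$ is strictly increasing on $[0,\infty)$, precisely when $t_* \leq 0$.

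Next I would translate $t_* \leq 0$ into a condition at $x$. As $\psi$ is smooth with strictly increasing derivative and $\psi'(t_*)=0$, we have $t_* \leq 0$ if and only if $\psi'(0) \geq 0$; and $\psi'(0) = d(f^2)_x(w) = \br{\grad(f^2)(x),w}_g = 2 f(x)\br{\grad f(x),w}_g = 2 d_g(x,p)\br{\grad f(x),w}_g$, using that $f^2$ is smooth at $x$ and $\grad(f^2)=2f\grad f$. Since $d_g(x,p)>0$ and $\abs{\grad f(x)}_g = 1$, this yields the clean characterization
\[
  \gamma_{x,w} \in \escp{p}(M) \iff \br{\grad f(x),w}_g \geq 0 .
\]
Moreover $w$ is tangent to $\sphere{p}{d_g(x,p)}$ exactly when $\br{\grad f(x),w}_g = 0$. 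Thus the hypothesis $\gamma_{x,v}\in\escp{p}(M)$ gives $\br{\grad f(x),v}_g \geq 0$, and together with non-tangency it forces the \emph{strict} inequality $\br{\grad f(x),v}_g > 0$.

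Now I would study $G(s) \mdef \br{\grad f(x),\rot{v}{s}}_g$, a smooth function whose sign governs the escaping of $\gamma_{p_s(x,v)} = \gamma_{x,\rot{v}{s}}$ by the characterization above. In the non-tangent case $G(0)>0$, so by continuity $G(s)>0$ for all sufficiently small $s$, and every such $\gamma_{p_s(x,v)}$ is escaping. In the tangent case $G(0)=0$; differentiating $\rot{v}{s}=\e^{is}v$ at $s=0$ gives $\frac{d}{ds}\rot{v}{s}\big|_{s=0} = -v_\perp$, and since $v \perp \grad f(x)$ in the two-dimensional space $T_xM$ forces $v_\perp = \pm\grad f(x)$, we obtain $G'(0) = -\br{\grad f(x),v_\perp}_g = \mp 1 \neq 0$. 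Therefore $G$ is strictly monotone near $0$ and changes sign there, so $G(s)>0$ either for all small $s>0$ or for all small $s<0$; on that side $\gamma_{p_s(x,v)}$ is escaping, which is the desired conclusion.

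The only genuinely delicate point is the characterization step: one must invoke \emph{strict} convexity of $t\mapsto d_g(\gamma(t),p)^2$ (not merely convexity of the distance) together with its coercivity from \eqref{geodesic distance} to guarantee a unique minimizer $t_*$, and one must handle with care the boundary case $t_*=0$ arising for tangent directions, where the geodesic still counts as escaping because strict monotonicity is only required on the closed ray $[0,\infty)$ lying to the right of the minimum. Once this equivalence is in place, the remainder is an elementary one-variable sign analysis of $G$.
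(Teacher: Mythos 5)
Your proof is correct and follows essentially the same route as the paper: both reduce the escaping property to the sign of $\frac{d}{dt}\, d_g(\gamma(t),p)^2\big|_{t=0}$ (via strict convexity of the squared distance) and then perturb in $s$. Your version is in fact slightly more complete than the paper's: by identifying that derivative as $2\,d_g(x,p)\,\br{\grad d_g(\cdot,p)(x),\rot{v}{s}}_g$ and computing $G'(0)=\mp 1\neq 0$, you actually justify the one-sided sign change in the tangent case, which the paper merely asserts.
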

\begin{proof}
Suppose first that $v$ is not tangent to
$\sphere{p}{d(x,p)}$. Then it must be that
$$
  \frac{d}{dt} d_g(\gamma_{x,v}(t),p)^2\Big|_{t=0} > 0.
$$
The function $s \mapsto \frac{d}{dt} d(\gamma_{p_s({x,v})}(t),p)^2$
is continuous and hence
\begin{equation}\label{d2gstdgtz}
  \frac{d}{dt} d_g(\gamma_{p_s({x,v})}(t),p)^2\Big|_{t=0} > 0
\end{equation}
for small $s$. Thus $\gamma_{p_s(x,v)} \in \escp{p}(M)$.

If $v$ is tangent to $\sphere{p}{d_g(x,p)}$ then
$$
  \frac{d}{dt} d_g(\gamma_{x,v}(t),p)^2\Big|_{t=0} = 0
$$
and \eqref{d2gstdgtz} holds either for small positive
$s$ or for small negative $s$.
\end{proof}

\begin{Lemma}\label{horisontal escaping}
Suppose $p \in M$ and $(x,v) \in SM$ is such that
$\gamma_{x,v} \in \escp{p}(M)$. Then $\gamma_{h_s(x,v)} \in \escp{p}(M)$ for
small $s$.
\end{Lemma}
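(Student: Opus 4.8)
The plan is to mirror the proof of Lemma~\ref{vertical escaping}: I first reduce the escaping property to the sign of the $t$-derivative of the squared distance at $t=0$, and then split into two cases according to whether $v$ is tangent to the sphere $\sphere{p}{d_g(x,p)}$.

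Write $h_s(x,v)=(x_s,w_s)$, where $x_s=\gamma_{x,v_\perp}(s)$ and $w_s=Z(s)$ is the parallel transport of $v$ along $\gamma_{x,v_\perp}$. The reduction I would record is this: since $t\mapsto d_g(\gamma_{x_s,w_s}(t),p)^2$ is strictly convex, it is strictly increasing on $[0,\infty)$ — equivalently $\gamma_{x_s,w_s}\in\escp{p}(M)$ — if and only if
\[
  \beta(s)\mdef \frac{d}{dt}d_g(\gamma_{x_s,w_s}(t),p)^2\Big|_{t=0}\ge 0 .
\]
The quantity $\beta(s)$ depends continuously on $s$, and $\beta(0)\ge 0$ because $\gamma_{x,v}\in\escp{p}(M)$. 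Writing $r=d_g(\cdot,p)$, one has $\beta(s)=2\,d_g(x_s,p)\br{\grad r(x_s),w_s}_g$, so $\beta(s)\ge0$ simply says that $w_s$ makes a non-obtuse angle with the outward radial direction.

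If $v$ is \emph{not} tangent to $\sphere{p}{d_g(x,p)}$, then $\beta(0)>0$: indeed $\beta(0)=0$ would force $v\perp\grad r(x)$, i.e.\ tangency. By continuity $\beta(s)>0$ for small $s$, and hence $\gamma_{h_s(x,v)}\in\escp{p}(M)$ for small $s$, exactly as in the nontangential case of Lemma~\ref{vertical escaping}.

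The hard part is the tangent case, where $\beta(0)=0$ and continuity alone is useless. Here I would exploit that $M$ is two-dimensional: the tangent line of $\sphere{p}{d_g(x,p)}$ at $x$ is one-dimensional and spanned by $v$, so its orthogonal complement is spanned by the radial direction $\grad r(x)$; since $v_\perp\perp v$ is a unit vector, this forces $v_\perp=\pm\grad r(x)$. Thus $\gamma_{x,v_\perp}$ is the radial geodesic through $x$, and its velocity $\gamma_{x,v_\perp}'(s)$ is the parallel radial field along it. Because $w_s$ is obtained from $v\perp\grad r(x)$ by parallel transport, and parallel transport preserves inner products, $w_s$ stays orthogonal to the radial field, i.e.\ $w_s$ is tangent to $\sphere{p}{d_g(x_s,p)}$ for all small $s$. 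By the remark following Lemma~\ref{spheres strictly convex}, $t\mapsto d_g(\gamma_{x_s,w_s}(t),p)^2$ then attains a global minimum at $t=0$, so by strict convexity it is strictly increasing on $[0,\infty)$; hence $\gamma_{h_s(x,v)}\in\escp{p}(M)$. The only routine point left to verify is that $x_s\neq p$ for small $s$, which is clear since $d_g(x,p)>0$, so that the squared distance is positive there and its square root is strictly increasing as well.
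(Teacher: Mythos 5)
Your proof is correct and follows essentially the same route as the paper's: the non-tangential case is handled by continuity of the $t$-derivative of the squared distance exactly as for $p_s$, and the tangential case uses that $v_\perp$ is then radial, so parallel transport along $\gamma_{x,v_\perp}$ keeps $w_s$ tangent to the sphere through $x_s$, whence escaping follows from the remark after Lemma~\ref{spheres strictly convex}. You merely spell out the details (the reduction to $\beta(s)\ge 0$ and the two-dimensionality argument) that the paper's two-line proof leaves implicit.
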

\begin{proof}
If $v$ is not tangent to $\sphere{p}{d_g(x,p)}$ then
proof is as for the flow $p_s$. If $v$ is tangent to
$\sphere{p}{d_g(x,p)}$ then $\gamma_{h_s(x,v)}(0)$
is tangent to $\sphere{p}{d_g(x,p)+ s}$
or $\sphere{p}{d_g(x,p) - s}$ and thus
$\gamma_{h_s(x,v)} \in \escp{p}(M)$.
\end{proof}

The next lemma is equation \eqref{geodesic distance estimate}
for $\gamma_{h_s}$ and $\gamma_{p_s}$.
\begin{Lemma}\label{geodesic distance estimate for flows}
For all $s$ such that $\gamma_{h_s(x,v)} \in \escp{p}(M)$
we have
$$
d_g(\gamma_{h_s(x,v)}(t),p)
  \geq \begin{cases}
     d_g(x,p) - s,&  0 \leq t \leq 2d_g(x,p), \\
     t - d_g(x,p) - s, & t > 2d_g(x,p).
\end{cases}
$$
For all $s$ such that $\gamma_{p_s(x,v)} \in \escp{p}(M)$
we have
$$
d_g(\gamma_{p_s(x,v)}(t),p)
  \geq \begin{cases}
     d_g(x,p),&  0 \leq t \leq 2d_g(x,p), \\
     t - d_g(x,p), & t > 2d_g(x,p).
\end{cases}
$$
\end{Lemma}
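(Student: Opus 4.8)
The plan is to obtain both displayed estimates directly from \eqref{geodesic distance estimate}, applied to the flowed geodesics; the only real content is to re-express the distance from the new base point to $p$ in terms of $d_g(x,p)$ via the triangle inequality.

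I would dispose of the rotation flow $p_s$ first, as it is immediate. Since $p_s(x,v) = (x,\rot{v}{s})$ fixes the base point, the geodesic $\gamma_{p_s(x,v)}$ starts at $x$, so its initial distance to $p$ is exactly $d_g(x,p)$. As $\gamma_{p_s(x,v)} \in \escp{p}(M)$ by hypothesis, equation \eqref{geodesic distance estimate} applied verbatim to $(x,\rot{v}{s})$ yields precisely the claimed two-case bound, with threshold $2d_g(x,p)$ and no dependence on $s$.

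For $h_s$ the base point moves: writing $y \mdef \gamma_{x,v_\perp}(s) = \gamma_{h_s(x,v)}(0)$ and using that $\gamma_{x,v_\perp}$ is unit speed gives $d_g(x,y) = |s|$, whence the triangle inequality yields $d_g(x,p) - |s| \le d_g(y,p) \le d_g(x,p) + |s|$. I would then split on $t$. For $0 \le t \le 2d_g(x,p)$, the escaping hypothesis says $t \mapsto d_g(\gamma_{h_s(x,v)}(t),p)$ is increasing on $[0,\infty)$, so it is at least its value at $t=0$, namely $d_g(y,p) \ge d_g(x,p) - |s|$; this is the first line. For $t > 2d_g(x,p)$ I would invoke the elementary bound \eqref{geodesic distance} with base point $y$, which gives $d_g(\gamma_{h_s(x,v)}(t),p) \ge t - d_g(y,p) \ge t - d_g(x,p) - |s|$; this is the second line.

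I expect no genuine obstacle, only bookkeeping. The statement advertises the threshold $2d_g(x,p)$ and the term $-s$, while the geodesic based at $y$ carries threshold $2d_g(y,p)$ and the triangle inequality naturally produces $-|s|$. The two-sided estimate $|d_g(y,p) - d_g(x,p)| \le |s|$ absorbs the threshold mismatch, since on the region where the two case-splits disagree one simply uses monotonicity in place of the large-$t$ branch; and for $s \ge 0$ one has $-s = -|s|$, so the stated form follows (otherwise one reads the bounds with $|s|$). The escaping hypothesis does all the real work, and the remainder is the triangle inequality.
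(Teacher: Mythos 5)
Your proof is correct and follows essentially the same route as the paper's: the triangle inequality to compare $d_g(\gamma_{h_s(x,v)}(0),p)$ with $d_g(x,p)$, the escaping hypothesis (monotonicity) for the branch $0 \leq t \leq 2d_g(x,p)$, and the elementary bound \eqref{geodesic distance} based at the shifted point for large $t$; your observation that the $p_s$ case is immediate because the base point is fixed matches the paper's remark that $d_g(\gamma_{p_s(x,v)}(0),x)=0$. The only cosmetic difference is that you keep $|s|$ throughout, whereas the paper implicitly takes $s \geq 0$ (as in its applications).
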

\begin{proof}
We have for $\gamma_{h_s(x,v)}$
by triangle inequality
$$
  d_g(\gamma_{h_s(x,v)}(0),p)
  \leq d_g(\gamma_{h_s(x,v)}(0),x) + d_g(x,p)
  = s + d_g(x,p)
$$
and furthermore
\begin{align*}
  d_g(\gamma_{h_s(x,v)}(t),\gamma_{h_s(x,v)}(0))
  &\leq
  d_g(\gamma_{h_s(x,v)}(t),p) + d_g(\gamma_{h_s(x,v)}(0),p) \\
  &\leq
  d_g(\gamma_{h_s(x,v)}(t),p) + s + d_g(x,p).
\end{align*}
so
$$
  t-s - d_g(x,p) \leq d_g(\gamma_{h_s(x,v)}(t),p).
$$

By triangle inequality
$$
  d_g(x,p) \leq d_g(\gamma_{h_s(x,v)}(0),p) + d_g(\gamma_{h_s(x,v)}(0),x)
  = d_g(\gamma_{h_s(x,v)}(0),p) + s.
$$
Because $\gamma_{h_s(x,v)}$ is in $\escp{p}(M)$ we get for $t \geq 0$
$$
  d_g(\gamma_{h_s(x,v)}(t),p)
  \geq d_g(\gamma_{h_s(x,v)}(0),p)
  \geq d_g(x,p) - s.
$$

The result for $\gamma_{h_s(x,v)}$ follows by combining these estimates.
For $\gamma_{p_s(x,v)}$ proof is similar, but we have
$d_g(\gamma_{p_s(x,v)}(0),x) = 0$.
\end{proof}

\section{The geodesic ray transform}

As mentioned in the introduction the geodesic ray transform  $If \colon SM \to \mR$
of a function $f \colon SM \to \mR$ is defined by
$$
  If(x,v) \mdef \int_{-\infty}^\infty f(\gamma_{x,v}(t)) \, dt.
$$

\begin{Lemma}
The geodesic ray transform is well defined for
$f \in \classP{\eta}(p,M)$ for $\eta > 1$.
\end{Lemma}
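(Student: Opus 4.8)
The plan is to show that for each fixed $(x,v) \in SM$ the integral defining $If(x,v)$ converges absolutely; since $t \mapsto f(\gamma_{x,v}(t))$ is continuous, this guarantees that $If(x,v)$ is a well-defined real number. First I would record that any $f \in \classP{\eta}(p,M)$ is globally bounded, because $|f(x)| \leq C(1+d_g(x,p))^{-\eta} \leq C$ for all $x \in M$. This handles the integrand on any bounded $t$-interval, where the geodesic stays in a compact set and the continuous integrand is controlled by this uniform bound.

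The decay for large $|t|$ is governed by the lower bound \eqref{geodesic distance}, namely $d_g(\gamma_{x,v}(t),p) \geq |t| - d_g(x,p)$, valid for every $t \in \mR$. I would split the real line into the region $|t| \leq 2d_g(x,p)$ and its complement. On the first region the global bound $|f| \leq C$ gives $\int_{|t| \leq 2d_g(x,p)} |f(\gamma_{x,v}(t))| \, dt \leq 4C\, d_g(x,p) < \infty$.

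On the complementary region $|t| > 2d_g(x,p)$, estimate \eqref{geodesic distance} yields $d_g(\gamma_{x,v}(t),p) \geq |t| - d_g(x,p) \geq |t|/2$, so that $|f(\gamma_{x,v}(t))| \leq C(1+|t|/2)^{-\eta}$. The tail integral $\int_{|t| > 2d_g(x,p)} (1+|t|/2)^{-\eta} \, dt$ converges precisely because $\eta > 1$, and combining the two regions shows $\int_{-\infty}^{\infty} |f(\gamma_{x,v}(t))| \, dt < \infty$.

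The argument presents no genuine obstacle; the only point needing a little care is the region where $d_g(\gamma_{x,v}(t),p)$ is small, near the foot point of the geodesic, but there continuity and global boundedness of $f$ contribute only a finite amount. One could alternatively invoke Lemma \ref{escaping direction} together with the sharper estimate \eqref{geodesic distance estimate}, treating the two half-rays separately, but using \eqref{geodesic distance}, which holds for all $t$ simultaneously, is cleaner and avoids splitting into escaping directions.
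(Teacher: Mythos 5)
Your proof is correct and follows essentially the same route as the paper: the same split of the $t$-axis at $|t| = 2d_g(x,p)$ and the same use of estimate \eqref{geodesic distance} to control the tails, with $\eta>1$ ensuring convergence. The only difference is that the paper first normalizes $x$ to be the point of the geodesic closest to $p$ (using the invariance $If(\varphi_t(x,v)) = If(x,v)$ and strict convexity of $t\mapsto d_g(\gamma(t),p)^2$), which yields the additional quantitative bound $|If(x,v)| \leq C(\eta)(1+d_g(x,p))^{-(\eta-1)}$ beyond mere convergence; your version establishes exactly the well-definedness that the lemma states.
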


\begin{proof}
Let $(x,v) \in SM$. Since $If(\gamma_{x,v}(t),\gamma_{x,v}'(t)) = If(x,v)$
for all $t \in \mR$, we can assume $x$ to be such that 
$$
  \min_{t \in\mR} d_g(\gamma_{x,v}(t),p) = d_g(x,p).
$$
Such a point always exists on any geodesic $\gamma$ since
the mapping $t \mapsto d_g(\gamma(t),p)^2$ is strictly convex.

By \eqref{geodesic distance} we then have
$$
  d_g(\gamma_{x,v}(t),p)
  \geq
  \begin{cases}
    d_g(x,p),&\text{if } |t| \leq 2d_g(x,p),\\
    |t| - d_g(x,p),&\text{if }2d_g(x,p) < |t|.
  \end{cases}
$$
Hence for $f \in \classP{\eta}(p,M), \eta > 1,$
\begin{align*}
  |If(x,v)|
  &\leq \int_{-\infty}^\infty |f(\gamma_{x,v}(t))| \, dt
  \leq \int_{-\infty}^\infty \frac{C}{(1+d_g(\gamma_{x,v}(t),p))^\eta} \, dt \\
  &\leq C \left( \int_0^{2d_g(x,p)} \frac{1}{(1 + d_g(x,p))^\eta} \, dt
  + \int_{2d_g(x,p)}^{\infty} \frac{1}{(1 + t - d_g(x,p))^\eta} \, dt \right) \\
  &\leq C \left( \frac{2d_g(x,p)}{(1 + d_g(x,p))^\eta}
  + \frac{1}{(\eta-1)(1+d_g(x,p))^{\eta-1}}\right) \\
  &\leq  \frac{C(\eta)}{(1 + d_g(x,p))^{\eta-1}}.\qedhere
\end{align*}
\end{proof}

Given a function $f$ on $M$ we define the
function $u^f \colon SM \to \mR$ by
$$
  u^f(x,v) = \int_0^\infty f(\gamma_{x,v}(t)) \, dt.
$$
We observe that
$$
  If(x,v) = u^f(x,v) + u^f(x,-v)
$$
for all $(x,v) \in SM$ whenever all the functions
are well defined.

In the next lemma we assume that $f$ is such that $If \equiv 0$
since those functions are in our interest.

\begin{Lemma}
Suppose $p \in M$ and $f$ is a function on $M$ such that $If \equiv 0$.
\begin{enumerate}
  \item If $f \in \classE{\eta}(p,M)$ for some $\eta > 0,$ then
    $$ |u^f(x,v)| \leq C(\eta) (1+d_g(x,p))\e^{-\eta d_g(x,p)}.$$
  \item If $f \in \classP{\eta}(p,M)$ for some $\eta > 1,$ then
    $$ |u^f(x,v)| \leq \frac{C(\eta)}{(1+d(x,p))^{\eta-1}}.$$
\end{enumerate}
\end{Lemma}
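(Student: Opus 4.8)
The plan is to reduce both estimates to an integral over a single escaping ray, where the curvature-free distance estimate \eqref{geodesic distance estimate} applies. The key observation is that the hypothesis $If \equiv 0$ gives
$$
  u^f(x,v) + u^f(x,-v) = If(x,v) = 0,
$$
so that $|u^f(x,v)| = |u^f(x,-v)|$ for every $(x,v) \in SM$. By Lemma \ref{escaping direction}, at least one of the geodesics $\gamma_{x,v}$, $\gamma_{x,-v}$ lies in $\escp{p}(M)$; after possibly replacing $v$ by $-v$ (which does not change $|u^f(x,v)|$), I may assume $\gamma_{x,v} \in \escp{p}(M)$. Thus it suffices to bound $|u^f(x,v)|$ under the assumption that the ray $\gamma_{x,v}$ restricted to $[0,\infty)$ is escaping.

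Granting this, the estimate \eqref{geodesic distance estimate} controls $d_g(\gamma_{x,v}(t),p)$ from below: it is at least $d_g(x,p)$ on $[0,2d_g(x,p)]$ and at least $t - d_g(x,p)$ for $t > 2d_g(x,p)$. I would then estimate
$$
  |u^f(x,v)| \leq \int_0^\infty |f(\gamma_{x,v}(t))| \, dt
$$
by splitting the integral at $t = 2d_g(x,p)$ and inserting the decay bound for $f$ together with \eqref{geodesic distance estimate}, exactly as in the computation of $|If(x,v)|$ in the previous lemma; the only difference is that the integral now runs over $[0,\infty)$ rather than $(-\infty,\infty)$.

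For case (1), with $|f(y)| \leq C\e^{-\eta d_g(y,p)}$, the near piece contributes $2C\,d_g(x,p)\,\e^{-\eta d_g(x,p)}$ and the far piece contributes $C(\eta)\e^{-\eta d_g(x,p)}$ after the substitution $s = t - d_g(x,p)$; summing and bounding $d_g(x,p) \leq 1 + d_g(x,p)$ gives the claimed bound $C(\eta)(1+d_g(x,p))\e^{-\eta d_g(x,p)}$. For case (2), with $|f(y)| \leq C(1+d_g(y,p))^{-\eta}$ and $\eta > 1$, the same split yields a near piece bounded by $2C\,d_g(x,p)(1+d_g(x,p))^{-\eta} \leq C(1+d_g(x,p))^{-(\eta-1)}$ and a far piece bounded by $C(\eta-1)^{-1}(1+d_g(x,p))^{-(\eta-1)}$, giving the stated decay.

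The only conceptual step is the symmetrization via $If \equiv 0$, and I expect it to be the main point rather than an obstacle. Without it one cannot control $u^f$ along the non-escaping ray directly, since on that ray the function $t \mapsto d_g(\gamma(t),p)$ first decreases before increasing and \eqref{geodesic distance estimate} does not apply. The identity $|u^f(x,v)| = |u^f(x,-v)|$ is precisely what lets me always work in the escaping direction, after which both estimates are routine integrations identical in structure to the preceding lemma.
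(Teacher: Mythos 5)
Your proposal is correct and follows essentially the same route as the paper: the symmetrization $|u^f(x,v)| = |u^f(x,-v)|$ via $If \equiv 0$ combined with Lemma \ref{escaping direction} to reduce to an escaping ray, followed by splitting the integral at $t = 2d_g(x,p)$ and applying \eqref{geodesic distance estimate}. The resulting computations in both cases match the paper's.
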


\begin{proof}
Since $If(x,v) = 0$ we have $|u^f(x,v)| = |u^f(x,-v)|$ for
all $(x,v) \in SM$. Thus, by Lemma \ref{escaping direction},
we can assume $(x,v)$ to be such that $\gamma_{x,v} \in \escp{p}(M)$.

If $f \in \classP{\eta}(p,M), \eta > 1$,
using the estimate \eqref{geodesic distance estimate} we obtain
\begin{align*}
  |u^f(x,v)|
  &\leq C\left( \int_0^{2d_g(x,p)} \frac{1}{(1+ d_g(\gamma_{x,v}(t),p))^\eta} \, dt\right.\\
  &+ \left. \int_{2d_g(x,p)}^\infty \frac{1}{(1+d_g(\gamma_{x,v}(t),p))^\eta} \, dt\right)\\
  &\leq \frac{C(\eta)}{(1+d_g(x,p))^{\eta-1}}.
\end{align*}

Similarly for $f \in \classE{\eta}(p,M), \eta > 0$, we get
\begin{align*}
  |u^f(x,v)|
  &\leq C\left( \int_0^{2d_g(x,p)} \e^{-\eta d_g(x,p)} \, dt
  + \int_{2d_g(x,p)}^\infty \e^{-\eta (t - d_g(x,p))} \, dt\right)\\
  &\leq C(\eta)  (1+d_g(x,p))\e^{-\eta d_g(x,p)}.\qedhere
\end{align*}
\end{proof}

Next we prove that $Xu^f = -f$, which can be seen
as a reduction to transport equation.
This idea is explained in details in \cite{PSU13}.

\begin{Lemma}\label{Xu = -f}
Suppose $f \in \classPd{\eta}(p,M)$ for some $\eta > 1$
and $If = 0$. Then $Xu^f(x,v) = -f(x)$ for every $(x,v) \in SM$.
\end{Lemma}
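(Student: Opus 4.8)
The plan is to exploit that the vector field $X$ is, by definition, the infinitesimal generator of the geodesic flow $\varphi_s$, so that for any function $w$ on $SM$ one has $Xw(x,v) = \frac{d}{ds}\big|_{s=0} w(\varphi_s(x,v))$ whenever the right-hand side exists. Applying this with $w = u^f$ reduces the claim to a one-variable differentiation, and the entire content of the lemma becomes the computation of $\frac{d}{ds}\big|_{s=0} u^f(\varphi_s(x,v))$.

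First I would record the flow property of geodesics: since $\varphi_s(x,v) = (\gamma_{x,v}(s),\gamma_{x,v}'(s))$, the geodesic issued from $\varphi_s(x,v)$ is just $\gamma_{x,v}$ reparametrized, i.e.\ $\gamma_{\varphi_s(x,v)}(t) = \gamma_{x,v}(s+t)$. Substituting this into the definition of $u^f$ and changing variables $\tau = s+t$ gives
$$
  u^f(\varphi_s(x,v)) = \int_0^\infty f(\gamma_{x,v}(s+t))\,dt = \int_s^\infty f(\gamma_{x,v}(\tau))\,d\tau .
$$
Here the hypothesis $\eta > 1$ enters only to guarantee that these integrals converge: the squared distance to $p$ is strictly convex, hence proper, along every geodesic, so $d_g(\gamma_{x,v}(\tau),p) \to \infty$ and the integrand decays like $\tau^{-\eta}$, which is integrable.

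It then remains to differentiate the displayed expression in $s$ at $s=0$. I would split $\int_s^\infty = \int_0^\infty - \int_0^s$, so that only the finite piece $\int_0^s f(\gamma_{x,v}(\tau))\,d\tau$ depends on $s$; since $\tau \mapsto f(\gamma_{x,v}(\tau))$ is continuous (as $f \in C^1(M) \subset C(M)$ and $\gamma_{x,v}$ is smooth), the fundamental theorem of calculus yields $\frac{d}{ds}\big|_{s=0}\int_0^s f(\gamma_{x,v}(\tau))\,d\tau = f(\gamma_{x,v}(0)) = f(x)$, whence $Xu^f(x,v) = -f(x)$. The only genuine subtlety — and the step I would guard most carefully — is that differentiation along $X$ really does reduce to differentiating the improper integral with respect to its lower limit; once the $s$-dependence is isolated in a finite integral this is routine, and the $C^1$ and decay hypotheses on $f$ serve merely to ensure convergence and continuity. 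I note that the assumption $If = 0$ plays no role in this particular identity; it belongs to the standing setup and is invoked only in the subsequent arguments.
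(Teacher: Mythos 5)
Your proof is correct, and it takes a genuinely different --- and more elementary --- route than the paper's. The paper differentiates under the integral sign in $\frac{d}{ds}\int_0^\infty f(\gamma_{\varphi_s(x,v)}(t))\,dt$, which forces it to (i) first use $If=0$ together with Lemma \ref{escaping direction} to reduce to a geodesic in $\escp{p}(M)$, and (ii) invoke the gradient decay $|\grad f|_g\in \classP{\eta+1}(p,M)$ to construct a dominating function for the dominated convergence theorem. You instead exploit the translation identity $\gamma_{\varphi_s(x,v)}(t)=\gamma_{x,v}(s+t)$ to rewrite $u^f(\varphi_s(x,v))=\int_s^\infty f(\gamma_{x,v}(\tau))\,d\tau$, which isolates the entire $s$-dependence in the finite integral $\int_0^s f(\gamma_{x,v}(\tau))\,d\tau$ and reduces the lemma to the one-variable fundamental theorem of calculus applied to a continuous integrand. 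This buys you strictly more: as you correctly observe, neither $If=0$ nor the $C^1$ regularity and gradient decay of $f$ is actually needed for this identity --- only enough decay (e.g.\ $f\in C(M)\cap\classP{\eta}(p,M)$, $\eta>1$) to make $u^f$ finite, so that the splitting $\int_s^\infty=\int_0^\infty-\int_0^s$ is legitimate. What the paper's heavier template buys is reusability: the differentiate-under-the-integral argument, with the tangent field replaced by the Jacobi fields $J_h$ and $J_p$, is exactly the mechanism needed later in Lemma \ref{derivative estimates} to control $X_\perp u^f$ and $Vu^f$, where no translation identity is available; your shortcut is specific to the $X$ direction.
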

\begin{proof}
We begin by observing that
$$
  X(If(x,v)) = Xu^f(x,v) + X (u^f(x,-v)) = 0 
$$
so $Xu^f(x,v) = -X(u^f(x,-v))$. Hence we can assume
the geodesic $\gamma_{x,v}$ to be in $\escp{p}(M)$ by
Lemma \ref{escaping direction}.

We have
\begin{align*}
  Xu^f(x,v)
  &= \frac{d}{ds} u^f(\varphi_s(x,v))\Big|_{s=0}
  = \frac{d}{ds} \int_0^\infty f(\gamma_{\varphi_s(x,v)}(t)) \, dt\Big|_{s=0} \\
  &= \int_0^\infty \frac{d}{ds} f(\gamma_{x,v}(s+t))\Big|_{s=0} \, dt
\end{align*}
where the last step needs to be justified.

Since we assumed our geodesic to be in $\escp{p}(M)$,
for $t,s \geq 0$ it holds
\begin{align*}
  |\frac{d}{ds} f(\gamma_{x,v}(t+s))|
  &= |d_{\gamma_{x,v}(t+s)}f(\gamma'_{x,v}(t+s))| \\
  &\leq \frac{C}{(1+d_g(\gamma_{x,v}(t+s),p))^{\eta+1}} \\
  &\leq \frac{C}{(1+d_g(\gamma_{x,v}(t),p))^{\eta+1}}.
\end{align*}
Using estimate \eqref{geodesic distance} as in the earlier proofs we obtain
\begin{align*}
  \int_0^\infty |\frac{d}{ds} f(\gamma_{x,v}(s+t))| \, dt
  &\leq \int_0^\infty \frac{C}{(1+d_g(\gamma_{x,v}(t),p))^{\eta+1}}\, dt \\
  &\leq \frac{C(\eta)}{(1+d_g(x,p))^\eta},
\end{align*}
which shows that the last step earlier is justified by the
dominated convergence theorem. 

Since
$$
  \frac{d}{ds} f(\gamma_{x,v}(t+s))\Big|_{s=0}
  = \frac{d}{dt} f(\gamma_{x,v}(t))
$$
and $f(\gamma_{x,v}(t)) \to 0$ as $t \to \infty$ we have
$$
  \int_0^\infty \frac{d}{ds} f(\gamma_{x,v}(s+t))\Big|_{s=0} \, dt
  = -f(x)
$$
by the fundamental theorem of calculus.
\end{proof}

\section{Regularity and decay of $u^f$}
In order to prove our main theorems we need to prove
$C^1$-regularity for $u^f$ given that the function $f$
has suitable regularity and decay properties.
For that we derive estimates for functions
$X_\perp u^f$ and $Vu^f$.
To prove the estimates for functions $X_\perp u^f$ and $Vu^f$ we
will proceed as in the case of $Xu = -f$ (Lemma \ref{Xu = -f}).
In the proof we calculated
$$
  \frac{d}{ds} f(\gamma_{\varphi_s(x,v)}(t))\Big|_{s=0} =
  d_{\gamma_{\varphi_s(x,v)}(t)}f(\frac{d}{ds}\gamma_{\varphi_s(x,v)}(t)\Big|_{s=0}).
$$

We can interpret $\frac{d}{ds}\gamma_{\varphi_s(x,v)}(t)\big|_{s=0}$
as a Jacobi field along the geodesic $\gamma_{x,v}$ since it is just the
tangent vector field.
For $X_\perp u^f$ and $Vu^f$ we proceed in a similar manner, the difference
being that the geodesic flow $\varphi_t$ is replaced with the flows $h_t$ and $p_t$
respectively.

Given geodesic $\gamma_{x,v}$ we denote
$$
  J_{\gamma_{x,v},h}(s,t) = \frac{d}{dr} \gamma_{h_r(x,v)}(t)\Big|_{r=s}
$$
and $J_{\gamma_{x,v},p}$ similarly.
Then $J_{\gamma_{x,v},h}(s,t)$ is a Jacobi field along geodesic $\gamma_{h_s(x,v)}$  for fixed $s$.
We will write $J_h(s,t)$ when it is clear from the context what
the undelying geodesic is.
We will also use shorthand notation $J_h(t) = J_h(0,t)$ and
$J_p(t) = J_p(0,t)$.

The Jacobi fields obtained in this manner
turn out to be normal with initial data
(see \cite{PU04})
\begin{align*}
J_h(s,0) = 1,\quad D_t J_h(s,0) = 0,\\
J_p(s,0) = 0,\quad D_t J_p(s,0) = 1.
\end{align*}

We need to have estimates for the growth of these two Jacobi
fields in particular. The first lemma giving estimates for the growth
is based on comparison theorems for Jacobi fields.
See for example \cite[Theorem 4.5.2]{Jos08}.
\begin{Lemma}\label{jacobi growth exponential}
Suppose $|K(x)| \leq K_0$ and $\gamma$ is a geodesic.
Then for Jacobi fields $J_p$ and $J_h$
along a geodesic $\gamma$ it holds that
\begin{align*}
  &|J_p(t)| \leq C(K_0)\e^{\sqrt{K_0}t},\\
  &|J_h(t)| \leq C(K_0)\e^{\sqrt{K_0}t},
\end{align*}
for $t \geq 0$.
\end{Lemma}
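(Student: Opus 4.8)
The plan is to reduce everything to a one-dimensional comparison problem. Since $\dim M = 2$, a normal Jacobi field along $\gamma$ is a scalar multiple of a fixed parallel unit normal field $e(t)$ along $\gamma$, so I may identify $J_h$ and $J_p$ with the scalar functions solving the scalar Jacobi equation $J''(t) + K(\gamma(t))\,J(t) = 0$, subject to $J_h(0)=1,\ J_h'(0)=0$ and $J_p(0)=0,\ J_p'(0)=1$. First I would record that both fields are positive for $t>0$: because $K\le 0$ on $M$, whenever $J\ge 0$ one has $J''=-KJ\ge 0$, so each solution is convex wherever it is nonnegative. For $J_h$ this forces $J_h\ge 1$, and for $J_p$ a convex function vanishing at $0$ with positive initial slope cannot return to zero, so $J_p>0$ on $(0,\infty)$. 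This positivity is what makes the curvature bound usable in the comparison step.

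Next I would introduce the constant-curvature model. In curvature $-K_0$ the scalar Jacobi equation is $c'' - K_0 c = 0$, whose solutions with the above initial data are $c_h(t)=\cosh(\sqrt{K_0}\,t)$ and $c_p(t)=\sinh(\sqrt{K_0}\,t)/\sqrt{K_0}$. Both satisfy $c_h(t),c_p(t)\le C(K_0)\e^{\sqrt{K_0}t}$ for $t\ge 0$, so the lemma reduces to the two domination statements $J_h\le c_h$ and $J_p\le c_p$ on $[0,\infty)$.

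For the comparison itself I would set $w:=c_\bullet-J_\bullet$ in each case, so that $w(0)=w'(0)=0$ and, using $c_\bullet''=K_0 c_\bullet$ and $J_\bullet''=-K J_\bullet$,
\[
  w'' - K_0 w = (K_0+K)\,J_\bullet \ge 0,
\]
where the inequality uses $K\ge -K_0$ (from $|K|\le K_0$) together with the positivity of $J_\bullet$ established above. Solving this differential inequality by variation of parameters against the kernel $\sinh(\sqrt{K_0}(t-s))/\sqrt{K_0}$, which is nonnegative for $0\le s\le t$, gives $w(t)=\int_0^t \frac{\sinh(\sqrt{K_0}(t-s))}{\sqrt{K_0}}\,(K_0+K(\gamma(s)))\,J_\bullet(s)\,ds \ge 0$, that is $J_\bullet\le c_\bullet$, as required. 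Equivalently one can invoke the Jacobi-field comparison theorem \cite[Theorem 4.5.2]{Jos08}.

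I expect the main obstacle to be the field $J_h$ rather than $J_p$. The classical Rauch comparison theorem is phrased for Jacobi fields vanishing at the initial point, so it applies verbatim to $J_p$ but not to $J_h$, whose initial datum is $J_h(0)=1$. The scalar differential-inequality argument above is what lets me treat $J_h$ on the same footing, and the only point requiring care there is justifying that $J_h$ stays nonnegative (so that the sign of $(K_0+K)J_h$ is controlled), which is exactly the convexity observation from the first step.
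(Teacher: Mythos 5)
Your argument is correct and amounts to the same thing the paper does: the paper offers no proof of this lemma, simply invoking the Jacobi-field comparison theorem of \cite[Theorem 4.5.2]{Jos08}, and your scalar reduction plus the Duhamel representation $w(t)=\int_0^t \sinh(\sqrt{K_0}(t-s))K_0^{-1/2}(K_0+K)J_\bullet\,ds\ge 0$ is precisely a self-contained proof of that comparison, including the correct observation that the non-vanishing initial datum of $J_h$ requires the differential-inequality form rather than the classical Rauch statement. The only caveat is implicit in the lemma itself rather than in your proof: one needs $K_0>0$ for the constant $C(K_0)$ (e.g.\ coming from $\sinh(\sqrt{K_0}t)/\sqrt{K_0}\le \e^{\sqrt{K_0}t}/(2\sqrt{K_0})$) to make sense, since for $K_0=0$ the field $J_p(t)=t$ is not bounded by a constant.
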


This lemma tells us that these Jacobi fields
will grow at most exponentially in presence of bounded curvature.
If the curvature happens to decay suitably we will see that these Jacobi fields will
grow only at a polynomial rate.

If $J(t)$ is a normal Jacobi field along a geodesic
$\gamma$ then we can write $J(t) = u(t)E(t)$ where
$u$ is a real valued function and $E(t)$ is a unit
normal vector field along $\gamma$. From the
Jacobi equation it follows that $u$ is a solution to
$$
  u''(t) + K(\gamma(t))u(t) = 0
$$
for $t \geq 0$ with initial values $u(0) = \pm|J(0)|$
and $u'(0) = \pm|D_t J(0)|$.

This leads us to consider an ordinary differential equation
\begin{equation}\label{ivp jacobi scalar non-hg}
  \begin{cases}
   &u''(t) + K(t)u(t) = 0,\quad t \geq 0,\\
   &u(0) = c_1,\\
   &u'(0) = c_2,
  \end{cases}
\end{equation}
for continuous $K$, where $c_1,c_2 \in \mR$. Note that for
$J_h$ and $J_p$ the constants $c_1$ and $c_2$ are either $0$ or $\pm 1$.

Waltman \cite{Wal64} proved that if $u$ is a solution
to \eqref{ivp jacobi scalar non-hg} with $K$ such that
$$
  \int_0^\infty t |K(t)| \, ds < \infty
$$
then $\lim_{t \to \infty} u(t)/t$ exists. We reproduce essential parts of the proof
in order to obtain a more quantitative estimate for the growth
of the solution $u$.
\begin{Lemma}\label{jacobi growth polynomial waltman}
Suppose $u$ is a solution to \eqref{ivp jacobi scalar non-hg} with
$$
  M_K \mdef \int_0^\infty s|K(s)| \, ds < \infty.
$$
and $c_1 = 1, c_2 = 0$ or other way around.
Then
$$
  |u(t)| \leq C_1 t + C_2
$$
for all $t \geq 0$ where $C_1,C_2 \geq 0$.
\end{Lemma}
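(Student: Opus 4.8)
The plan is to recast the initial value problem \eqref{ivp jacobi scalar non-hg} as a Volterra integral equation and then run a Gronwall argument on a suitably weighted version of $u$. Integrating $u'' = -K u$ once gives $u'(t) = c_2 - \int_0^t K(s)u(s)\,ds$, and integrating a second time and switching the order of integration yields
\begin{equation*}
  u(t) = c_1 + c_2 t - \int_0^t (t-s) K(s) u(s) \, ds .
\end{equation*}
This is just the variation-of-parameters formula with the Green's function of $d^2/dt^2$; it is equivalent to \eqref{ivp jacobi scalar non-hg} because $u$ is continuous, and it is the natural object on which to exploit the integrability hypothesis $M_K < \infty$.

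Next I would introduce the weighted quantity $\phi(t) \mdef |u(t)|/(1+t)$, the point of the weight $(1+t)$ being to absorb both the linear driving term $c_2 t$ and the kernel $(t-s)$. From the integral equation and $|c_1|,|c_2|\le 1$ we get
\begin{equation*}
  \frac{|c_1| + |c_2| t}{1+t} \le 1,
\end{equation*}
while for the integral term I would use the elementary bound $(t-s)/(1+t) \le 1$ valid for $0 \le s \le t$. Dividing the integral equation by $(1+t)$ and writing $|u(s)| = (1+s)\phi(s)$ then produces the integral inequality
\begin{equation*}
  \phi(t) \le 1 + \int_0^t |K(s)|\,(1+s)\,\phi(s)\,ds .
\end{equation*}

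To close the argument I need $\int_0^\infty |K(s)|(1+s)\,ds < \infty$. This splits as $\int_0^\infty |K|\,ds + M_K$, and the first integral is finite because $K$ is continuous (hence integrable on $[0,1]$) and on $[1,\infty)$ we have $|K(s)| \le s|K(s)|$, so $\int_1^\infty |K| \le M_K$. Writing $A \mdef \int_0^\infty |K(s)|(1+s)\,ds$, Gronwall's inequality applied to the displayed integral inequality gives $\phi(t) \le \exp\!\big(\int_0^t |K(s)|(1+s)\,ds\big) \le e^{A}$ for all $t \ge 0$, whence $|u(t)| \le e^{A}(1+t)$, i.e.\ the claimed bound with $C_1 = C_2 = e^{A}$. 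The only genuinely delicate points are the choice of the weight $(1+t)$, which is what simultaneously tames the kernel $(t-s)$ and forces the hypothesis $\int_0^\infty s|K|\,ds<\infty$ to enter exactly, and the observation that this hypothesis also yields $\int_0^\infty |K|\,ds<\infty$; once the integral inequality is in place, the Gronwall step is routine.
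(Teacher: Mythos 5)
Your proof is correct, and although both arguments are ultimately Gronwall estimates driven by the hypothesis $\int_0^\infty s|K(s)|\,ds<\infty$, your route is genuinely different from the paper's and somewhat cleaner. The paper follows Waltman's original scheme: it sets $A(t)=u'(t)$ and $B(t)=u(t)-tu'(t)$, so that $u(t)=A(t)t+B(t)$, derives an integral inequality for the quantity $|A(t)|+|B(t)/t|$ on an interval $[t_0,\infty)$ (the term $B(t)/t$ forces one to stay away from $t=0$), applies a Gronwall-type comparison theorem there, and then treats $[0,t_0]$ separately, invoking the exponential Jacobi-field bound of Lemma \ref{jacobi growth exponential} to control $|u(t_0)|$ and $|u'(t_0)|$ before finally setting $t_0=1$. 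Your approach --- the Volterra equation $u(t)=c_1+c_2t-\int_0^t(t-s)K(s)u(s)\,ds$ combined with the weight $(1+t)$ --- runs on all of $[0,\infty)$ in one pass, so you avoid both the splitting at $t_0$ and the appeal to the exponential comparison lemma. The one place where you pay for this is that your constant involves $\int_0^1|K(s)|\,ds$, which the bare hypothesis $M_K<\infty$ does not control near $s=0$; you correctly flag that continuity of $K$ (or the bound $|K|\le K_0$ available in the paper's setting) supplies this, and since $\int_0^\infty|K|\,ds\le K_0+M_K$ your constants still depend only on $K_0$ and $M_K$, which is exactly the uniformity that Lemma \ref{jaboci on set G} later extracts from the paper's proof. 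So the proposal is a valid, and arguably preferable, substitute.
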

\begin{proof}
We define
$A(t) = u'(t)$ and $B(t) = u(t) - tu'(t)$ so $u(t) = A(t)t + B(t)$.
Fix $t_0 > 0$. For all $t > t_0$ it holds
\begin{align*}
  &A(t) = A(t_0) - \int_{t_0}^t K(s)s\left(A(s) + \frac{B(s)}{s}\right) \, ds,\\
  &B(t) = B(t_0) + \int_{t_0}^t K(s)s^2\left(A(s) + \frac{B(s)}{s}\right) \, ds.
\end{align*}
If we define
$|v(t)| = |A(t)| + |B(t)/t|$
we have 
$$
  |v(t)| \leq |v(t_0)| + 2 \int_{t_0}^t s|K(s)||v(s)| \, ds.
$$
By a theorem of Viswanatham \cite{Vis63}
it holds $|v(t)| \leq \psi(t)$ on $[t_0,\infty)$ where $\psi$ is
a solution to
$$
  \psi'(t) = 2t|K(t)|\psi(t)
$$
with $\psi(t_0) = |v(t_0)|$. Hence
$$
  \psi(t) = |v(t_0)| \e^{2\int_{t_0}^t s|K(s)| \, ds} \leq  |v(t_0)|\e^{2 M_K}
$$
and furthermore
$$
  |u(t)| = |t v(t)|
  \leq t \e^{2 M_K}|v(t_0)|
$$
for $t \geq t_0$.

Then we need to estimate $|v(t_0)|$. In order to do so
we need estimates for $|u(t_0)|$ and $|u'(t_0)|$.
We can apply Lemma \ref{jacobi growth exponential} to get
$$
  |u(t)| \leq C(K_0)\e^{\sqrt{K_0}t_0}
$$
on interval $[0,t_0]$ where we have denoted $K_0 = \sup_{t \in [0,t_0]} |K(t)|$.
By integrating equation \eqref{ivp jacobi scalar non-hg}
we obtain
\begin{align*}
  |u'(t_0)|
  &\leq |u'(0)| + \int_0^{t_0} |K(t)| |u(t)|\, ds \\
  &\leq |c_2| + |\sup_{t \in [0,t_0]}u(t)|K_0 t_0
\end{align*}
Thus
\begin{align*}
  |v(t_0)|
  &\leq |A(t_0)| + |B(t_0)/t_0| \leq |u(t_0)/t_0| + 2 |u'(t_0)|\\
  &\leq C(K_0)(\frac{1}{t_0} +  2K_0t_0)\e^{\sqrt{K_0}t_0}
   + 2.
\end{align*}

By combining the estimates for intervals $[0,t_0]$ and
$[t_0,\infty)$ and setting $t_0 = 1$ we obtain that 
$$
  |u(t)| \leq t \e^{2 M_K}|v(1)| + C(K_0)\e^{\sqrt{K_0}}
$$
for $t \geq 0$.
\end{proof}

\begin{Lemma}\label{jaboci on set G}
Suppose $|K(x)| \leq K_0$ and
that $G$ is a set of geodesics such that
$$
  M_G \mdef \sup_{\gamma \in G} \int_0^\infty t|K(t)| \, dt < \infty.
$$
Let $\gamma \in G$.
Then for Jacobi fields $J_p$ and $J_h$ along geodesic $\gamma$ holds
\begin{align*}
  &|J_p(t)| \leq C(M_G)t,\\
  &|J_h(t)| \leq C(M_G)(t+1).
\end{align*}
for all $t \geq 0$. Especially the constants do not depend
on the geodesic $\gamma$.
\end{Lemma}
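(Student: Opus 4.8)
The plan is to reduce both bounds to the scalar initial value problem \eqref{ivp jacobi scalar non-hg} and then invoke Lemma \ref{jacobi growth polynomial waltman}, taking care that every constant it produces depends only on $M_G$ and the uniform bound $K_0$, never on the individual geodesic $\gamma$. Since $J_p$ and $J_h$ are normal Jacobi fields along $\gamma$, I would write $J_p(t) = u_p(t)E(t)$ and $J_h(t) = u_h(t)E(t)$ with $E$ a unit normal parallel field, so that $u_p$ and $u_h$ each solve $u'' + K(\gamma(t))u = 0$. The prescribed initial data $J_p(0)=0,\ D_tJ_p(0)=1$ and $J_h(0)=1,\ D_tJ_h(0)=0$ translate into $(u_p(0),u_p'(0)) = (0,1)$ and $(u_h(0),u_h'(0)) = (1,0)$, which are precisely the two cases permitted in Lemma \ref{jacobi growth polynomial waltman}.

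First I would apply that lemma to each of $u_p,u_h$ to obtain $|u_p(t)|,|u_h(t)| \leq C_1 t + C_2$ for $t \geq 0$. The decisive point is the uniformity of $C_1,C_2$. The proof of Lemma \ref{jacobi growth polynomial waltman} (with $t_0 = 1$) bounds $|u(t)|$ by $t\,\e^{2M_K}|v(1)| + C(K_0)\e^{\sqrt{K_0}}$, where $M_K = \int_0^\infty s|K(\gamma(s))|\,ds$. For every $\gamma \in G$ we have $M_K \leq M_G$ by the definition of $M_G$, while $|v(1)|$ is controlled purely in terms of $K_0$. Hence $C_1 \leq \e^{2M_G}C(K_0)$ and $C_2 = C(K_0)\e^{\sqrt{K_0}}$ are bounded independently of $\gamma$ (the dependence on the standing bound $K_0$ being suppressed in the notation $C(M_G)$), which already gives $|J_h(t)| = |u_h(t)| \leq C(M_G)(t+1)$.

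It then remains to sharpen the bound for $J_p$ from $C(t+1)$ to $Ct$. Here I would exploit $u_p(0) = 0$: on $[0,1]$ the comparison estimate of Lemma \ref{jacobi growth exponential} together with integration of \eqref{ivp jacobi scalar non-hg} yields $\sup_{[0,1]}|u_p'| \leq C(K_0)$, so that $|u_p(t)| = |\int_0^t u_p'(s)\,ds| \leq C(K_0)t$ for $t \in [0,1]$; on $[1,\infty)$ the Waltman estimate already supplies the purely linear bound $|u_p(t)| \leq t\,\e^{2M_K}|v(1)|$. Combining the two ranges produces $|J_p(t)| \leq C(M_G)t$ with no additive constant.

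The main obstacle here is bookkeeping rather than a new idea: I must check that none of the constants arising in Lemmas \ref{jacobi growth exponential} and \ref{jacobi growth polynomial waltman} secretly depend on $\gamma$ through anything beyond $M_K$ (bounded by $M_G$) and the global curvature bound $K_0$. The only genuinely additional step is the near-origin refinement for $J_p$, which uses that its initial value vanishes, so the constant term in the linear bound is absorbed into the linear one.
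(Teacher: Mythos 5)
Your proposal is correct and follows essentially the same route as the paper: reduce to the scalar equation, apply Lemma \ref{jacobi growth polynomial waltman}, observe that the constants there depend only on $K_0$ and on $\int_0^\infty t|K(\gamma(t))|\,dt \leq M_G$, and then use $J_p(0)=0$ to absorb the additive constant into the linear term. Your explicit near-origin argument for $J_p$ on $[0,1]$ just fills in a step the paper only asserts.
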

\begin{proof}
Suppose geodesic $\gamma_{x,v}$ is in $G$.
By Lemma \ref{jacobi growth polynomial waltman} we obtain
\begin{align*}
  &|J_h(t)| \leq C_1 t + C_2,\\
  &|J_p(t)| \leq C_1 t + C_2.
\end{align*}
From the proof of that lemma we see that
constants $C_1$ and $C_2$ above depend on the lower bound for $K$ and the quantity
$$
  \int_0^\infty -tK(\gamma_{x,v}(t)) \, dt.
$$
Since this quantity is bounded from above by $M_G$
we can estimate constants $C_1$ and $C_2$ by above and get rid of the dependence
on the geodesic $\gamma_{x,v}$. So the constants depend only on the Gaussian curvature
$K$ and the initial conditions.

Furthermore, since $|J_p(0)| = 0$ we can drop the constant
$C_2$ in the estimate for $J_p(t)$ by making $C_1$ accordingly
larger.
\end{proof}

Next lemma is a straightforward corollary of the preceding lemma.

\begin{Lemma}\label{jacobi growth polynomial}
Suppose $K \in \classP{\eta}(p,M)$ for some $\eta > 2$.
If $\gamma \in \escp{p}(M)$ then for Jacobi fields $J_p$ and $J_h$
along geodesic $\gamma$ one has
\begin{align*}
  &|J_p(t)| \leq Ct,\\
  &|J_h(t)| \leq C(t+1),
\end{align*}
for all $t \geq 0$, where the constants
do not depend on the geodesic $\gamma$.
\end{Lemma}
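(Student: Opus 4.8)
The plan is to deduce this from Lemma \ref{jaboci on set G} by taking the geodesic set $G = \escp{p}(M)$ and checking its two hypotheses. The curvature bound is immediate: since $K \in \classP{\eta}(p,M)$ we have $|K(x)| \le C(1+d_g(x,p))^{-\eta} \le C$ for every $x \in M$, so $|K(x)| \le K_0$ holds with $K_0 = C$. The substantive step is to verify that
\[
  M_G = \sup_{\gamma \in \escp{p}(M)} \int_0^\infty t\,|K(\gamma(t))|\,dt < \infty,
\]
with $K(\gamma(t))$ denoting the Gaussian curvature along $\gamma$. Granting this, Lemma \ref{jaboci on set G} delivers the bounds $|J_p(t)| \le C(M_G)t$ and $|J_h(t)| \le C(M_G)(t+1)$ with constants independent of $\gamma$, which is exactly the assertion.

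To bound the integral, I would fix an escaping geodesic $\gamma = \gamma_{x,v}$, set $r = d_g(x,p)$, and exploit the fact that $\gamma \in \escp{p}(M)$ to apply the distance estimate \eqref{geodesic distance estimate}, splitting the integral at $t = 2r$. On $[0,2r]$ the bound $d_g(\gamma(t),p) \ge r$ yields $|K(\gamma(t))| \le C(1+r)^{-\eta}$, so this part contributes at most a multiple of $r^2(1+r)^{-\eta} \le (1+r)^{2-\eta}$. On $(2r,\infty)$ the bound $d_g(\gamma(t),p) \ge t - r$ gives $|K(\gamma(t))| \le C(1+t-r)^{-\eta}$; after the substitution $u = t-r$ and the elementary inequality $t \le 2u$ valid on that range, the tail is controlled by $2\int_r^\infty (1+u)^{1-\eta}\,du$, again a multiple of $(1+r)^{2-\eta}$.

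The crucial observation is that the hypothesis $\eta > 2$ is precisely what makes this bound uniform in $r$: the tail integral converges only because $1-\eta < -1$, and both contributions are multiples of $(1+r)^{2-\eta}$ with $2-\eta < 0$, hence bounded by a constant depending only on $C$ and $\eta$ (in fact they tend to $0$ as $r \to \infty$). Thus $\int_0^\infty t|K(\gamma(t))|\,dt \le C(\eta)$ uniformly over all escaping geodesics, giving $M_G < \infty$. The main---and essentially only---obstacle is securing this bound uniformly in the starting distance $r$; once the structure of \eqref{geodesic distance estimate} is combined with $\eta > 2$, the estimate closes with no dependence on the particular geodesic.
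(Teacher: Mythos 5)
Your proposal is correct and follows exactly the paper's route: the paper also deduces the lemma from Lemma \ref{jaboci on set G} with $G = \escp{p}(M)$, merely asserting without detail that $\sup_{\gamma \in \escp{p}(M)} \int_0^\infty -K(\gamma(t))\,t\,dt < \infty$. Your splitting of the integral at $t = 2r$ via \eqref{geodesic distance estimate} correctly supplies the verification the paper leaves implicit.
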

\begin{proof}
Since $K \in \classP{\eta}(p,M),
\eta > 2$, we have
\begin{equation*}\label{geodesic weighted curvature sup}
  \sup_{\gamma \in \escp{p}(M)} \int_0^\infty -K(\gamma(t)) t \, dt < \infty.\qedhere
\end{equation*}
\end{proof}

With Lemmas \ref{jacobi growth exponential} and \ref{jacobi growth polynomial}
we can derive estimates for $X_\perp u^f$ and $Vu^f$.

\begin{Lemma}\label{derivative estimates}
Let $f \in C(M)$ be such that $If = 0$.
\begin{enumerate}
\item If $|K(x)| \leq K_0$ and $f \in \classEd{\eta}(p,M)$ for some $\eta > \sqrt{K_0}$,
  then
  $$
    |X_\perp u^f(x,v)| \leq C(\eta,K_0)\e^{(2\sqrt{K_0} - \eta)d_g(x,p)}
  $$
  for all $(x,v) \in SM$.
\item If $f \in \classPd{\eta}(p,M)$ for some $\eta > 1$
  and $K \in \classP{\tilde\eta}(p,M)$ for some $\tilde\eta > 2$, then
  $$
    |X_\perp u^f(x,v)| \leq \frac{C(\eta)}{(1+d_g(x,p))^{\eta-1}}
  $$
  for all $(x,v) \in SM$.
\end{enumerate}

Both estimates hold also if $X_\perp$ is replaced by $V$.
\end{Lemma}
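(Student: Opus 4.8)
The plan is to mimic the proof of Lemma \ref{Xu = -f}, replacing the geodesic flow $\varphi_t$ by the flows $h_t$ and $p_t$ and the tangent vector field by the Jacobi fields $J_h$ and $J_p$. First I would reduce to escaping geodesics. Since $If\equiv 0$ we have $u^f(x,-v)=-u^f(x,v)$, and with the flip $\iota(x,v)=(x,-v)$ the flows satisfy $h_s\circ\iota=\iota\circ h_{-s}$ and $p_s\circ\iota=\iota\circ p_s$; differentiating these relations together with $u^f\circ\iota=-u^f$ yields $X_\perp u^f(x,-v)=X_\perp u^f(x,v)$ and $Vu^f(x,-v)=-Vu^f(x,v)$. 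In either case $|X_\perp u^f|$ and $|Vu^f|$ are invariant under $v\mapsto -v$, and since the asserted bounds depend only on $d_g(x,p)$, Lemma \ref{escaping direction} lets me assume $\gamma_{x,v}\in\escp{p}(M)$.

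Next I would differentiate under the integral sign. Writing $X_\perp u^f(x,v)=\frac{d}{ds}\int_0^\infty f(\gamma_{h_s(x,v)}(t))\,dt\big|_{s=0}$ and commuting $\frac{d}{ds}$ with the integral gives, using $\frac{d}{ds}\gamma_{h_s(x,v)}(t)\big|_{s=0}=J_h(t)$,
$$X_\perp u^f(x,v) = \int_0^\infty d_{\gamma_{x,v}(t)} f\bigl(J_h(t)\bigr)\, dt,$$
with the analogous identity for $Vu^f$ in terms of $J_p$. In particular
$$|X_\perp u^f(x,v)| \leq \int_0^\infty |\grad f|_g(\gamma_{x,v}(t))\, |J_h(t)|\, dt.$$

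Then I would estimate this integral by splitting it at $t=2d_g(x,p)$ and applying the escaping estimate \eqref{geodesic distance estimate}. For part (1) I combine $|\grad f|_g(y)\leq C\e^{-\eta d_g(y,p)}$ with the exponential Jacobi bound $|J_h(t)|\leq C(K_0)\e^{\sqrt{K_0}t}$ of Lemma \ref{jacobi growth exponential}; both pieces then yield $C(\eta,K_0)\e^{(2\sqrt{K_0}-\eta)d_g(x,p)}$, the tail converging precisely because $\eta>\sqrt{K_0}$. For part (2) I use $|\grad f|_g(y)\leq C(1+d_g(y,p))^{-(\eta+1)}$ with the polynomial bound $|J_h(t)|\leq C(t+1)$ from Lemma \ref{jacobi growth polynomial} (available since $\gamma_{x,v}$ is escaping and $K\in\classP{\tilde\eta}(p,M)$, $\tilde\eta>2$); the same splitting produces $C(\eta)(1+d_g(x,p))^{-(\eta-1)}$, the tail converging because $\eta>1$. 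As $|J_p(t)|$ obeys the same (indeed sharper) bounds, the identical computation handles $Vu^f$.

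I expect the main obstacle to be justifying the interchange of differentiation and integration, exactly as in Lemma \ref{Xu = -f}. For this I would produce, for $s$ in a small interval about $0$, an integrable majorant of $|\frac{d}{ds}f(\gamma_{h_s(x,v)}(t))|\leq |\grad f|_g(\gamma_{h_s(x,v)}(t))\,|J_h(s,t)|$ uniform in $s$. The Jacobi factor is controlled uniformly by Lemma \ref{jacobi growth exponential} in case (1) and by Lemma \ref{jaboci on set G} in case (2); here I must note that $\gamma_{h_s(x,v)}$ stays escaping for small $s$ by Lemma \ref{horisontal escaping}, whereas for $p_s$ with $v$ tangent to $\sphere{p}{d_g(x,p)}$ one only has one-sided escaping, so I would instead keep the weighted curvature integral $M_G$ of Lemma \ref{jaboci on set G} finite directly from \eqref{geodesic distance} (the base point of $\gamma_{p_s(x,v)}$ remains $x$). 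The decay factor is controlled by Lemma \ref{geodesic distance estimate for flows} (resp.\ \eqref{geodesic distance}). The product being integrable uniformly in $s$, dominated convergence applies and the formula above is justified.
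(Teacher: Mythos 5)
Your proposal is correct and follows essentially the same route as the paper: reduce to escaping geodesics via the symmetry forced by $If=0$, differentiate under the integral with the Jacobi-field identification of $\frac{d}{ds}\gamma_{h_s(x,v)}(t)$, split at $t=2d_g(x,p)$ using the escaping distance estimates, and invoke Lemmas \ref{jacobi growth exponential} and \ref{jacobi growth polynomial} for the two cases, with dominated convergence justified by a uniform majorant over small $s$ (the paper handles the tangent-direction issue for $p_s$ by choosing the sign of $s$ via Lemma \ref{vertical escaping}, which amounts to the same one-sided argument you describe). Your explicit verification of the flip symmetries $h_s\circ\iota=\iota\circ h_{-s}$ and $p_s\circ\iota=\iota\circ p_s$ is a detail the paper only asserts.
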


\begin{proof}
Let us first notice that since $If = 0$, it holds
$|X_\perp u^f(x,-v)| = |X_\perp u^f(x,v)|$ for all $(x,v) \in SM$.
Thus we will assume that $v$ is such that
$\gamma_{x,v} \in \escp{p}(M)$.

Firts we note that
$$
  \frac{d}{ds} f(\gamma_{h_s(x,v)}(t))
  = d_{\gamma_{h_s(x,v)}(t)}f(J_h (s,t)).
$$
By definition
\begin{align*}
  X_\perp u^f(x,v)
  &= \frac{d}{ds} \int_0^\infty f(\gamma_{h_s(x,v)}(t)) \, dt \Big|_{s=0}
  = \int_0^\infty \frac{d}{ds} f(\gamma_{h_s(x,v)}(t)) \, dt \Big|_{s=0}\\
  &= \int_0^\infty d_{\gamma_{x,v}(t)} f(J_h(s,t)) \, dt
\end{align*}
where the second equality holds by the dominated convergence theorem provided that
there exists function $F \in L^1([0,\infty))$ such that
\begin{equation}\label{dominating function}
  \left|\frac{d}{ds} f(\gamma_{h_s(x,v)}(t))\right| \leq F(t)
\end{equation}
for all $t \geq 0$ and for small non-negative $s$.

Lemma \ref{horisontal escaping} states that for small $s$ it holds
that $\gamma_{h_s(x,v)} \in \escp{p}(M)$.
Hence in the first case
using Lemmas \ref{geodesic distance estimate for flows}
and \ref{jacobi growth exponential}
we get
\begin{align*}
  |d_{\gamma_{h_s(x,v)}(t)} f(J_h(s,t))|
  &\leq C(K_0)\e^{\sqrt{K_0}t} \e^{-\eta d_g(\gamma_{h_s(x,v)}(t),p)} \\
  &\leq
  \begin{cases}
     C(K_0)\e^{\eta s}\e^{\sqrt{K_0}t}\e^{-\eta d_g(x,p)},&  0 \leq t \leq 2d_g(x,p), \\
     C(K_0)\e^{\eta s}\e^{\sqrt{K_0}t}\e^{-\eta (t-d_g(x,p))}, & t > 2d_g(x,p),
  \end{cases}
\end{align*}
and thus
$$
  \int_0^\infty |d_{\gamma_{h_s(x,v)}(t)} f(J_h(s,t))|
  \leq C(\eta,K_0)\e^{\eta s}\e^{(2\sqrt{K_0} - \eta)d_g(x,p)}.
$$

In the second case we obtain
$$
  |d_{\gamma_{h_s(x,v)}(t)}f(J_h(s,t))|
  \leq
  \begin{cases}
     \frac{C(t+1)}{(1 + d_g(x,p) - s)^{\eta+1}},&  0 \leq t \leq 2d_g(x,p), \\
     \frac{C(t+1)}{(1 + t -d_g(x,p) - s)^{\eta+1}}, & t > 2d_g(x,p).
  \end{cases}
$$
Therefore
$$
  \int_0^\infty |d_{\gamma_{h_s(x,v)}(t)} f(J_h(s,t))| \, dt
  \leq
  \frac{C(\eta)}{(1-s+d_g(x,p))^{\eta-1}}.
$$
From these estimates we see that
such a function $F$ exists in both cases. Setting
$s=0$ gives the estimates for $|X_\perp u^f(x,v)|$.

In case of $V$ instead of $X_\perp$ we
proceed in the same manner. First we notice that $|Vu^f(x,-v)| = |Vu^f(x,v)|$
for all $(x,v) \in SM$. Thus we will assume that $v$ is such that
$\gamma_{x,v} \in \escp{p}(M)$. In addition we will assume $v$ to be such that
$\gamma_{p_s(x,v)} \in \escp{p}(M)$ for small
non-negative $s$, this can be done by Lemma \ref{vertical escaping}.
The rest of the proof is then similar.
\end{proof}

From this result we see that if $f$ is a $C^1$-function
with suitable decay properties then $u^f$ is in
$C^1(SM)$. Later we will approximate $u^f$ with
functions $u^{f_k} \in C^2(SM)$ where functions
$f_k$ are compactly supported $C^2$-functions on $M$.
The following lemma shows that functions $u^{f_k}$ are indeed in $C^2(SM)$.

\begin{Lemma}\label{C^2 for compactly supported}
Suppose that $f \in C^2(M)$ is compactly supported. Then
$u^f \in C^2(SM)$.
\end{Lemma}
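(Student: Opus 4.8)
The plan is to reduce the improper integral defining $u^f$ to an integral over a fixed compact time interval on a neighborhood of each point of $SM$, and then to differentiate under the integral sign. Fix $(x_0,v_0)\in SM$ and choose a relatively compact open neighborhood $U\subset SM$ of $(x_0,v_0)$. Fix any point $p\in M$ and, using that $f$ is compactly supported, choose $R>0$ with $\supp f\subset\ball{p}{R}$. Since $U$ is relatively compact, $D\mdef\sup_{(x,v)\in U}d_g(x,p)<\infty$. By the escaping estimate \eqref{geodesic distance} we have $d_g(\gamma_{x,v}(t),p)\geq t-d_g(x,p)\geq t-D$ for $t\geq 0$, so whenever $t>T\mdef R+D$ the point $\gamma_{x,v}(t)$ lies outside $\ball{p}{R}$ and hence $f(\gamma_{x,v}(t))=0$ for every $(x,v)\in U$. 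Consequently $u^f(x,v)=\int_0^T f(\gamma_{x,v}(t))\,dt$ for all $(x,v)\in U$.

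Next I would observe that the integrand is jointly $C^2$. The geodesic vector field $X$ on $SM$ is smooth, so its flow $\varphi_t$ — and therefore the map $(x,v,t)\mapsto\gamma_{x,v}(t)$ obtained by composing $\varphi_t$ with the footpoint projection $SM\to M$ — is smooth in all of its variables. Composing with $f\in C^2(M)$ shows that $(x,v,t)\mapsto f(\gamma_{x,v}(t))$ is $C^2$ on $U\times[0,T]$. Expressing everything in local coordinates on the three-manifold $SM$, the first and second partial derivatives of this integrand are continuous, hence bounded, on the compact set $\overline{U}\times[0,T]$.

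The final step is the classical differentiation-under-the-integral-sign theorem: for a function that is $C^2$ jointly on (local coordinates of) $U\times[0,T]$, integration over the fixed compact interval $[0,T]$ yields a $C^2$ function of the remaining variables, with all partial derivatives up to order two obtained by differentiating inside the integral; this follows from the uniform continuity of those derivatives on the compact domain $\overline{U}\times[0,T]$. Hence $u^f\in C^2(U)$, and since $(x_0,v_0)$ was arbitrary, $u^f\in C^2(SM)$.

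As for the main obstacle, the only nontrivial point is producing a single cutoff time $T$ that works uniformly over a whole neighborhood $U$, which is what legitimizes replacing $\int_0^\infty$ by $\int_0^T$ and reduces the problem to the compact case. This is precisely where the Cartan–Hadamard geometry enters, through the convexity-based estimate \eqref{geodesic distance}; without such a uniform bound one would instead have to justify differentiating a genuinely improper integral and control the tails of the integrand together with the a priori exponentially growing Jacobi fields, as is done in Lemma \ref{derivative estimates}. The compact-support hypothesis renders the Jacobi-field growth irrelevant, so this argument remains elementary.
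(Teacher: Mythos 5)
Your proof is correct, but it takes a genuinely different and more elementary route than the paper. You localize to a relatively compact neighborhood $U$, use the Cartan--Hadamard estimate \eqref{geodesic distance} to produce a single truncation time $T$ valid on all of $U$, rewrite $u^f$ as an integral over the compact interval $[0,T]$ of the jointly $C^2$ integrand $(x,v,t)\mapsto f(\gamma_{x,v}(t))$, and invoke the classical Leibniz rule. The paper instead works directly in the frame $\{X,X_\perp,V\}$: it uses $Xu^f=-f$ together with the structural equations to dispose of every second derivative involving $X$, and for the remaining ones ($V^2u^f$, $X_\perp^2u^f$, $VX_\perp u^f$) it differentiates the Jacobi-field integral representations under the integral sign, with the compact support of $\grad f$ and $\Hess f$ justifying the interchange. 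Your argument is shorter, avoids Jacobi fields entirely, and immediately yields $u^f\in C^k(SM)$ whenever $f\in C^k(M)$ is compactly supported; you also correctly identify that the global minimizing property of geodesics (hence the uniform truncation) is the one place the Cartan--Hadamard hypothesis is used. What the paper's computation buys in exchange is explicit formulas for the second derivatives in terms of $\Hess f$ and the Jacobi fields $J_h$, $J_p$, matching the machinery of Lemma \ref{derivative estimates} and serving as the template for the non-compactly-supported estimates; but for the bare statement of the lemma your approach suffices.
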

\begin{proof}
Since $f$ is compactly supported we have
\begin{align*}
  &Xu^f(x,v) = -f(x),\\
  &X_\perp u^f(x,v) = \int_0^\infty d_{\gamma_{x,v}(t)}f(J_h(t)) \, dt,\\
  &V u^f(x,v) = \int_0^\infty d_{\gamma_{x,v}(t)}f(J_p(t)) \, dt.
\end{align*}

From the structural equations and the knowledge that
$Xu^f = -f$ we can deduce that
$VXu^f, XVu^f, X_\perp Xu^f, X X_\perp u^f$ and
$X^2u^f$ exist.

With other means we have to check that $V^2  u^f, X_\perp^2 u^f$ and
$VX_\perp u^f$ (or equivalently $X_\perp V u^f$) exist.

Let us calculate a formula for $VX_\perp u^f(x,v)$
and from that we see the existence.
By definition
\begin{align*}
  V X_\perp u^f(x,v)
  &= \frac{d}{ds} X_\perp u^f(p_s(x,v))\Big|_{s=0} \\
  &= \frac{d}{ds} \int_0^\infty d_{\gamma_{p_s(x,v)}(t)}f(J_{\gamma_{p_s(x,v)},h}(t))  \, dt\Big|_{s=0}.
\end{align*}
We write
$$
  d_{\gamma_{p_s(x,v)}(t)}f(J_{\gamma_{p_s(x,v)},h}(t))
  = \br{\grad f(\gamma_{p_s(x,v)}(t)),J_{\gamma_{p_s(x,v)},h}(t)}.
$$
Since
$$
  \br{ D_s \grad f(\gamma_{p_s(x,v)}(t)), J_{\gamma_{p_s(x,v)},h}(t)}
  = \Hess f (\gamma_{p_s(x,v)})(J_p(s,t),J_{\gamma_{p_s(x,v)},h}(t))
$$
we have
\begin{align*}
  \frac{d}{ds} d_{\gamma_{p_s(x,v)}(t)}f(J_{\gamma_{p_s(x,v)},h}(t))
  &= \Hess f (\gamma_{p_s(x,v)})(J_p(s,t),J_{\gamma_{p_s(x,v)},h}(t)) \\
  &+ \br{\grad f(\gamma_{p_s(x,v)})(t),D_s J_{\gamma_{p_s(x,v)},h}(t)}.
\end{align*}
Since $\Hess f$ and $\grad f$ are compactly supported
we can move derivative $\frac{d}{ds}$ into integral
and deduce that $VX_\perp u^f (x,v)$ exists for all $(x,v) \in SM$.

Proofs for $V^2 u^f$ and $X_\perp^2 u^f$ are once again similar.
\end{proof}

As a last application of Lemmas
\ref{jacobi growth exponential} and \ref{jacobi growth polynomial} we derive
an estimate for the volumes of spheres in our setting.

\begin{Lemma}\label{volume of sphere}
Suppose $|K| \leq K_0$ and $p \in M$. Then
$$
  \Vol \sphere{p}{r} \leq C(K_0)\e^{\sqrt{K_0}r}.
$$
If $K \in \classP{\eta}(p,M)$ for some $\eta > 2$, then
$$
  \Vol \sphere{p}{r} \leq C t.
$$
\end{Lemma}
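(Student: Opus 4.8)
The plan is to compute the length of the geodesic circle $\sphere{p}{r}$ directly in geodesic polar coordinates centered at $p$ and then insert the Jacobi field bounds already established. Since $(M,g)$ is Cartan--Hadamard, the exponential map $\exp_p \colon T_pM \to M$ is a diffeomorphism, so every point of $M \setminus \{p\}$ has unique polar coordinates $(r,\theta)$, where $r = d_g(\cdot,p)$ and $\theta \in S^1$ parametrizes the unit directions in $T_pM$. In these coordinates the metric takes the form $g = dr^2 + \rho(r,\theta)^2\, d\theta^2$, and $\sphere{p}{r}$ is exactly the coordinate circle $\{r = \text{const}\}$, whose length is therefore
\[
  \Vol \sphere{p}{r} = \int_0^{2\pi} \rho(r,\theta) \, d\theta.
\]

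The key observation is that $\rho(r,\theta)$ is precisely the norm of the Jacobi field $J_p$ along the radial geodesic. Fix a direction $\theta$ with corresponding unit vector $v \in T_pM$. Rotating $v$ is exactly the vertical flow, so that $\gamma_{p_s(p,v)}(t) = \exp_p(t\, \rot{v}{s})$, and the variation field $\frac{d}{ds}\exp_p(t\,\rot{v}{s})\big|_{s=0}$ is by definition the Jacobi field $J_p(0,t) = J_p(t)$ along $\gamma_{p,v}$, which satisfies $J_p(0) = 0$, $D_t J_p(0) = 1$ and is normal to the geodesic. In polar coordinates this variation field is $\partial_\theta$, whose length is $\rho(r,\theta)$; hence $\rho(r,\theta) = |J_p(r)|$. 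There are no conjugate points because $K \leq 0$, so $\rho > 0$ for $r > 0$. Consequently $\Vol \sphere{p}{r} = \int_0^{2\pi} |J_p(r)| \, d\theta$, where for each $\theta$ the Jacobi field is taken along the radial geodesic $\gamma_{p,v}$.

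It remains to bound $|J_p(r)|$ uniformly in $\theta$. Every radial geodesic from $p$ satisfies $d_g(\gamma_{p,v}(t),p) = t$, which is strictly increasing on $[0,\infty)$, so $\gamma_{p,v} \in \escp{p}(M)$ for every $v$. In the first case $|K| \leq K_0$, and Lemma \ref{jacobi growth exponential} gives $|J_p(r)| \leq C(K_0)\e^{\sqrt{K_0}r}$ with a constant independent of $\theta$; integrating over $S^1$ yields $\Vol \sphere{p}{r} \leq C(K_0)\e^{\sqrt{K_0}r}$. In the second case $K \in \classP{\eta}(p,M)$ with $\eta > 2$, and since each radial geodesic is escaping, Lemma \ref{jacobi growth polynomial} gives $|J_p(r)| \leq Cr$ with a constant independent of $\theta$; integrating gives $\Vol \sphere{p}{r} \leq Cr$.

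The only point requiring care is the identification $\rho(r,\theta) = |J_p(r)|$ together with the uniformity of the constants in the angular variable $\theta$. The latter is exactly what Lemmas \ref{jacobi growth exponential} and \ref{jacobi growth polynomial} provide: their constants depend only on $K_0$ (respectively on the curvature decay) and not on the individual geodesic, so they survive integration against $d\theta$ over the compact circle $S^1$ without difficulty.
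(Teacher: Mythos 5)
Your proof is correct and follows essentially the same route as the paper: geodesic polar coordinates at $p$, identification of the angular metric coefficient with the Jacobi field $J_p$ arising from the vertical flow, and then the bounds of Lemmas \ref{jacobi growth exponential} and \ref{jacobi growth polynomial}. If anything you are slightly more careful than the paper, since you explicitly note that radial geodesics from $p$ are escaping (needed to invoke Lemma \ref{jacobi growth polynomial}) and that the constants are uniform in $\theta$; you also implicitly correct the statement's typo $Ct$ to $Cr$.
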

\begin{proof}
We use polar coordinates centered at point $p$.
Fix a tangent vector $v \in S_pM$ and define mapping
$f \colon [0,\infty) \times (0,2\pi) \to M$ by
$f(r,\theta) = \exp_p(r \rot{v}{\theta})$.
This gives the usual polar coordinates
in which the metric $g$ takes form
$$
  g(r,\theta) = dr^2 + \left| \frac{df}{d\theta} \right|^2 d\theta^2
$$
and the corresponding volume form is
$$
  dV_g(r,\theta) = \left| \frac{df}{d\theta} \right| dr\wedge d\theta.
$$

Since $\exp_p(r \rot{v}{\theta}) = \gamma_{p_\theta(p,v)}(r)$
we have
\begin{align*}
  \frac{df}{d\theta}(r,\theta)
  = \frac{d}{dt} \gamma_{p_\theta(p,v)}(r)= J_p(r,\theta)
\end{align*}
and hence the volume form on $S_p(r)$ is given by
$$
  \iota_{\doo_r}dV_g(r,\theta) = \left| \frac{df}{d\theta} \right| d\theta
  = J_p(r,\theta) d\theta. 
$$
By Lemma \ref{jacobi growth exponential}
$$
  \Vol \sphere{p}{r} \leq \int_0^{2\pi} C(K_0)\e^{\sqrt{K_0}r} \, d\theta
  = C(K_0)\e^{\sqrt{K_0}r}.
$$

In the presence of the additional assumption for the Gaussian curvauture
Lemma \ref{jacobi growth polynomial} yields
\[
  \Vol \sphere{p}{r} \leq C t.\qedhere
\]
\end{proof}

\section{Pestov identity and $C^2$-approximation}
In this section we prove our main theorems. The proofs are based on
a certain kind of energy estimate for the operator $P = VX$ called the
Pestov identity. We will use Pestov identity with boundary terms on
submanifolds of $(M,g)$.
Througout this section we denote $M_{p,r} = \ball{p}{r} \subset M$, a submanifold of $M$ with
boundary $\sphere{p}{r}$.

The following form of Pestov identity constitutes
the main argument for our proofs of the main theorems.

\begin{Lemma}[\cite{IS16}]\label{pestov}
For $u \in C^2(SM)$ it holds
\begin{align*}
  \norm{VXu}_{L^2(SM_{p,r})}^2
  &= \norm{XVu}_{L^2(SM_{p,r})}^2 + \norm{Xu}_{L^2(SM_{p,r})}^2 - \br{KVu,Vu}_{SM_{p,r}}\\
  &- \br{\br{v,\nu}Vu,X_\perp u}_{\partial SM_{p,r}}
   + \br{\br{v_\perp,\nu}Vu,Xu}_{\partial SM_{p,r}}
\end{align*}
\end{Lemma}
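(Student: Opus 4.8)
The plan is to derive the identity from the three structure equations $[X,V]=X_\perp$, $[V,X_\perp]=X$, $[X,X_\perp]=-KV$ together with integration-by-parts formulas on $SM_{p,r}$ with respect to the Sasaki volume $d\Sigma^3$. First I would record the divergence identities for the frame fields. All three of $X$, $X_\perp$, $V$ are divergence-free with respect to $d\Sigma^3$ (standard; for $X$ this is Liouville's theorem), so by the divergence theorem each is skew-adjoint up to a boundary flux, giving
\begin{align*}
  \br{Xa,b}_{SM_{p,r}} + \br{a,Xb}_{SM_{p,r}} &= \br{\br{v,\nu}a,b}_{\partial SM_{p,r}},\\
  \br{X_\perp a,b}_{SM_{p,r}} + \br{a,X_\perp b}_{SM_{p,r}} &= \br{\br{v_\perp,\nu}a,b}_{\partial SM_{p,r}},\\
  \br{Va,b}_{SM_{p,r}} + \br{a,Vb}_{SM_{p,r}} &= 0.
\end{align*}
The crucial geometric point is that $\partial SM_{p,r}$ is the full unit-sphere bundle over the boundary sphere $\sphere{p}{r}$; the vertical field $V$ is tangent to the fibers and hence to the boundary, so it contributes no flux, while the horizontal fields $X$ and $X_\perp$ project on $M$ to $v$ and $v_\perp$, producing the boundary weights $\br{v,\nu}$ and $\br{v_\perp,\nu}$, with $\nu$ the outer unit normal of $\sphere{p}{r}$ in $M$.

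Next, using $[X,V]=X_\perp$ in the form $VXu = XVu - X_\perp u$, I would expand
$$
  \norm{VXu}^2_{L^2(SM_{p,r})} = \norm{XVu}^2_{L^2(SM_{p,r})} - 2\br{XVu,X_\perp u}_{SM_{p,r}} + \norm{X_\perp u}^2_{L^2(SM_{p,r})},
$$
so that everything reduces to rewriting the single cross term $\br{XVu,X_\perp u}_{SM_{p,r}}$. I would process it by three successive integrations by parts. Moving $X$ off $XVu$ yields a boundary term $\br{\br{v,\nu}Vu,X_\perp u}_{\partial SM_{p,r}}$ and the interior term $-\br{Vu,XX_\perp u}$; rewriting $XX_\perp = X_\perp X - KV$ splits off the curvature term $\br{KVu,Vu}_{SM_{p,r}}$. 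Moving $X_\perp$ off the resulting $\br{Vu,X_\perp Xu}$ gives a second boundary term $\br{\br{v_\perp,\nu}Vu,Xu}_{\partial SM_{p,r}}$ and, via $X_\perp V = VX_\perp - X$, produces the term $\norm{Xu}^2$. Finally moving $V$ (which has no boundary flux) and invoking $VXu = XVu - X_\perp u$ once more closes the loop, re-expressing the result in terms of $\br{XVu,X_\perp u}$ and $\norm{X_\perp u}^2$.

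Collecting these steps, the cross term satisfies an equation of the form
$$
  2\br{XVu,X_\perp u}_{SM_{p,r}} = \norm{X_\perp u}^2 - \norm{Xu}^2 + \br{KVu,Vu}_{SM_{p,r}} + B_1 - B_2,
$$
where $B_1 = \br{\br{v,\nu}Vu,X_\perp u}_{\partial SM_{p,r}}$ and $B_2 = \br{\br{v_\perp,\nu}Vu,Xu}_{\partial SM_{p,r}}$. Substituting this into the expansion, the $\norm{X_\perp u}^2$ terms cancel and one arrives precisely at the stated identity, with $-B_1$ and $+B_2$ furnishing the two boundary pairings. The main obstacle is not the interior algebra, which is mechanical once the structure equations are applied in the right order, but the careful justification of the boundary divergence formulas and the verification that the two fluxes assemble into $-\br{\br{v,\nu}Vu,X_\perp u}_{\partial SM_{p,r}} + \br{\br{v_\perp,\nu}Vu,Xu}_{\partial SM_{p,r}}$; here the identification of the horizontal normal components as $\br{v,\nu}$, $\br{v_\perp,\nu}$ and the vanishing of the vertical flux are the delicate points. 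Since $u\in C^2(SM)$ and $SM_{p,r}$ is compact with smooth boundary (the geodesic spheres being smooth by the Cartan--Hadamard structure), all the integrations by parts are legitimate.
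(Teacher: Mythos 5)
Your derivation is correct. The paper itself gives no proof of this lemma --- it is quoted directly from \cite{IS16} --- so there is nothing internal to compare against; but your argument is precisely the standard one behind the cited result (cf.\ also \cite{PSU13}): expand $\norm{VXu}^2 = \norm{XVu - X_\perp u}^2$ via $[X,V]=X_\perp$, then process the cross term $\br{XVu,X_\perp u}$ by integrating $X$, $X_\perp$ and $V$ by parts in turn, inserting $XX_\perp = X_\perp X - KV$ and $X_\perp V = VX_\perp - X$ along the way. I checked the bookkeeping: the relation $2\br{XVu,X_\perp u} = \norm{X_\perp u}^2 - \norm{Xu}^2 + \br{KVu,Vu} + B_1 - B_2$ is exactly what the three integrations by parts produce, and substituting it back cancels the $\norm{X_\perp u}^2$ terms and yields the stated identity with the correct signs on the boundary pairings. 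The supporting facts you flag as delicate are indeed the ones that need care and are all true here: $X$, $X_\perp$, $V$ preserve $d\Sigma^3$ (for $X_\perp=[X,V]$ this follows from $\lie_{[X,V]}=[\lie_X,\lie_V]$); the outward normal of $\partial SM_{p,r}$ is the horizontal lift of $\nu$, so the fluxes of $X$ and $X_\perp$ are $\br{v,\nu}$ and $\br{v_\perp,\nu}$ while the vertical field $V$ has none; and $\sphere{p}{r}$ is a smooth circle since $\exp_p$ is a diffeomorphism, so $SM_{p,r}$ is compact with smooth boundary and $u\in C^2(SM)$ suffices for all the integrations by parts.
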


By using approximating sequences we can relax the regularity
assumptions for the Pestov identity.
Especially the Pestov identity holds for $u^f$ with
suitable $f$.

\begin{Lemma}\label{pestov for uf}
Suppose either one of the following:
\begin{enumerate}
  \item $|K(x)| \leq K_0$ and $f \in \classEd{\eta}(p,M)\cap C^2(M)$ for some $\eta > \sqrt{K_0}$.
  \item $f \in \classPd{\eta}(p,M) \cap C^2(M)$ for some $\eta > 1$
  and $K \in \classP{\tilde\eta}(p,M)$ for some $\tilde\eta > 2$.
\end{enumerate} 
If $If = 0$, then  the Pestov identity in Lemma \ref{pestov} holds for $u^f$.
\end{Lemma}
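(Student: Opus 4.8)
The plan is to prove Lemma \ref{pestov for uf} by approximating $u^f$ with functions $u^{f_k}$ where $f_k \in C^2_c(M)$ are compactly supported, so that Lemma \ref{C^2 for compactly supported} guarantees $u^{f_k} \in C^2(SM)$ and hence the raw Pestov identity of Lemma \ref{pestov} applies to each $u^{f_k}$. I would then pass to the limit $k \to \infty$ in each of the five terms of the identity, using the decay estimates already established (Lemmas \ref{Xu = -f}, \ref{derivative estimates}, and the integrability bounds on $u^f$, $Xu^f$, $X_\perp u^f$, $Vu^f$) to justify convergence of the $L^2$ interior terms and the boundary pairings. The conclusion is that the identity survives the limit, so it holds for $u^f$ itself.

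\begin{proof}
We construct the approximating sequence by setting $f_k \mdef \chi_k f$, where $\chi_k$ is a smooth cutoff equal to $1$ on $\ball{p}{k}$, supported in $\ball{p}{k+1}$, with $|\grad \chi_k|_g$ bounded uniformly in $k$. Then each $f_k \in C^2_c(M)$ and, since $If_k$ need not vanish, we instead work directly with the functions $u^{f_k}$ defined by $u^{f_k}(x,v) = \int_0^\infty f_k(\gamma_{x,v}(t))\,dt$, which lie in $C^2(SM)$ by Lemma \ref{C^2 for compactly supported}. For these Lemma \ref{pestov} gives the identity verbatim on each $SM_{p,r}$.

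The core of the argument is to show that the relevant derivatives converge. I would first verify the pointwise convergences $u^{f_k} \to u^f$, $Xu^{f_k} \to Xu^f = -f$, $Vu^{f_k} \to Vu^f$, and $X_\perp u^{f_k} \to X_\perp u^f$ on $SM_{p,r}$, together with $VXu^{f_k} \to VXu^f$ and $XVu^{f_k} \to XVu^f$. The key point is that the differentiated integrands carry Jacobi-field factors $J_h(t)$, $J_p(t)$ whose growth is controlled by Lemma \ref{jacobi growth exponential} in case (1) and Lemma \ref{jacobi growth polynomial} in case (2), while the hypotheses $\eta > \sqrt{K_0}$ (resp.\ $\eta > 1$ with $\tilde\eta > 2$) guarantee that these products are dominated by a fixed $L^1([0,\infty))$ function uniformly in $k$; the cutoffs only truncate the integration range and do not spoil the bounds of Lemma \ref{derivative estimates}. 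Dominated convergence then yields the pointwise limits, and since $SM_{p,r}$ has finite $\Sigma^3$-measure (its base is the compact ball $\ball{p}{r}$, whose volume is finite by Lemma \ref{volume of sphere}) and the dominating estimates are uniform over $SM_{p,r}$, the convergences upgrade to convergence in $L^2(SM_{p,r})$. This handles the four interior terms $\norm{VXu}^2$, $\norm{XVu}^2$, $\norm{Xu}^2$, and $\br{KVu,Vu}$, the last using also $|K|\leq K_0$ in case (1) or $K \in \classP{\tilde\eta}(p,M) \subset L^\infty$ in case (2).

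For the two boundary pairings over $\partial SM_{p,r} = S\sphere{p}{r}$, the functions $Vu^{f_k}$, $X_\perp u^{f_k}$, $Xu^{f_k}$ converge uniformly on the fixed compact set $\partial SM_{p,r}$ by the same dominated-convergence estimates restricted to that sphere, so the bilinear boundary integrals converge. Taking $k \to \infty$ in the identity for $u^{f_k}$ therefore produces the identity for $u^f$. \textbf{The main obstacle} I expect is the uniform-in-$k$ integrable domination of the twice-differentiated integrands: one must check that introducing the cutoff does not generate terms, via $\grad\chi_k$, that fail to be controlled, and that the second-order quantities $VXu^{f_k}$ and $XVu^{f_k}$ (whose existence comes from Lemma \ref{C^2 for compactly supported}) converge despite not having an explicit decay lemma of their own. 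This is resolved by expressing $VXu^f = -Vf$ via Lemma \ref{Xu = -f} and by using the structural equations to rewrite $XVu^f$ in terms of $X_\perp u^f$ and $Vu^f$, for which Lemma \ref{derivative estimates} already supplies the needed decay.
\end{proof}
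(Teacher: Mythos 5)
Your proposal is correct and follows essentially the same route as the paper: approximate $f$ by cutoffs $\chi_k f$, invoke Lemma \ref{C^2 for compactly supported} to apply Lemma \ref{pestov} to each $u^{\chi_k f}$, reduce the second-order terms via $Xu=-f$ and the structural equation $XVu = VXu + X_\perp u$, and pass to the limit using the Jacobi-field growth lemmas and finite volume of $SM_{p,r}$ and its boundary. The only (immaterial) difference is your choice of cutoff, supported in an annulus of fixed width with uniformly bounded gradient rather than the paper's $|\grad\varphi_k|\leq C/k$ on $\ball{p}{2k}$; both make the $\grad\chi_k$ error term vanish as $k\to\infty$.
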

\begin{proof}
Lemmas \ref{Xu = -f} and \ref{derivative estimates}
ensure that all terms of the Pestov identity are finite.

We define $u_k = u^{\varphi_k f}$
where $\varphi_k \colon M \to \mR$ is a smooth
cutoff function such that
\begin{enumerate}
  \item $0 \leq \varphi_r(x) \leq 1$ for all $x \in M$.
  \item $\varphi_k(x) = 1$ for $x \in \ball{p}{k}$.
  \item $\varphi_k(x) = 0$ for $x \not\in \ball{p}{2k}$.
  \item $|\grad \varphi|_g \leq C/k$ for all $x \in M$
  and $v \in T_xM$.
\end{enumerate}
Such a function can be defined by
$$
  \varphi_k(x) \mdef  \varphi\left(\frac{d_g(x,p)}{k}\right)
$$
where $\varphi$ is a suitable smooth cutoff function on $\mR$.
Since functions $\varphi_k$ are smooth and compactly supported,
we have $u_k \in C^2(SM)$ by Lemma \ref{C^2 for compactly supported}.

Let us move on to prove the convergence.
First we observe that
$$
  Xu_k(x,v)\big|_{SM_{p,r}} = -f(x)
$$
for large $k$. Therefore
we have convergence in $L^2$-norm
for the term $Xu_k$.

Next we prove convergence for $XV u_k$
under the assumption that
$f \in \classPd{\eta}(p,M) \cap C^2(M)$ for some $\eta > 1$
and $K \in \classP{\tilde\eta}(p,M)$ for some $\tilde\eta > 2$.
First we notice that
$$
  XVu_k = VXu_k + X_\perp u_k = X_\perp u_k
$$
for large $k$. Similarly $XVu^f = X_\perp u^f$
so it is enough to prove that $X_\perp u_k$ converges
to $X_\perp u^f$.
Furthermore since $SM_{p,r}$ has finite volume it is enough to prove that
$X_\perp u_k \to X_\perp u^f$ in $L^\infty$-norm.

Let us denote $G = \{ \gamma_{x,v} \colon (x,v) \in SM_{p,r} \}$.
The set $G$ fulfills the assumption of Lemma \ref{jaboci on set G}.
Suppose $(x,v) \in SM_r$.
We have
\begin{align*}
  X_\perp u_k(x,v) - X_\perp u^f(x,v)
  &= \int_0^\infty d_{\gamma_{x,v}(t)}(\varphi_k f)(J_h(t)) \, dt \\
  &- \int_0^\infty d_{\gamma_{x,v}(t)}f (J_h(t)) \, dt \\
  &= \int_0^\infty (\varphi_k(\gamma_{x,v}(t))-1)d_{\gamma_{x,v}(t)}f(J_h(t)) \, dt \\
  &+ \int_0^\infty f(\gamma_{x,v}(t))d_{\gamma_{x,v}(t)}\varphi_k(J_h(t)) \, dt.
\end{align*}

For $t \geq 0$ holds
$$
  d_g(\gamma_{x,v}(t),p) \geq t- d_g(x,p) \geq t - r.
$$
Also
$$
  (1 - \varphi_k(\gamma_{x,v}(t)) = 0
$$
at least for $0 \leq t \leq k - r$
and $d_{\gamma_{x,v}(t)}\varphi_k$ can be non-zero only
in interval $[k-r,2k+r]$, which can be seen using
triangle inequality.

Hence we can estimate, with help of Lemma \ref{jaboci on set G}, that
\begin{align*}
  |X_\perp u_k(x,v) - X_\perp u^f(x,v)|
  &\leq \int_{k-r}^\infty |d_{\gamma_{x,v}(t)}f(J_h(t))| \, dt \\
  &+ \int_{k-r}^{2k+r} |f(\gamma_{x,v}(t))d_{\gamma_{x,v}(t)}\varphi_k(J_h(t))| \, dt\\
  &\leq C_1\int_{k-r}^\infty \frac{t}{(1 + d(\gamma_{x,v}(t),p))^{\eta+1}} \, dt \\
  &+ \frac{C_2}{k}\int_{k-r}^{2k+r} \frac{t}{(1 + d(\gamma_{x,v}(t),p))^{\eta+1}} \, dt\\
  &\leq C_1\int_{k-r}^\infty \frac{t}{(1 + t-r)^{\eta+1}} \, dt \\
  &+ \frac{C_2}{k}\int_{k-r}^{2k+r} \frac{t}{(1 + t-r)^{\eta+1}} \, dt.
\end{align*}
The last two integrals do not depend on $(x,v)$ and
they also tend to zero as $k \to \infty$,
which proves the $L^\infty$-convergence.
In similar manner we can prove
convergence for $Vu_k$.

Convergence for the boundary terms follows also
from the $L^\infty$-convergence because
the boundary $\partial SM_{p,r}$ has a finite volume.

In the other case we proceed similarly but use
Lemma \ref{jacobi growth exponential} instead of
Lemma \ref{jaboci on set G}.
\end{proof}

We are ready to prove our main theorems.
\begin{proof}[Proof of Theorem 1]
Since the geodesic ray transform is linear
it is enough to show that $If = 0$
implies $f = 0$.

Let us assume $f \in \classEd{\eta}(p,M) \cap C^2(M)$,
$\eta > \frac{5}{2}\sqrt{K_0}$,
is such that $If = 0$.
Lemma \ref{pestov for uf} tell us that Pestov identity
holds for $u^f$. We will
apply it on submanifold $SM_{p,r}$.

Since $Xu^f = -f$, the term on the left hand side
of the Pestov identity is zero.
Because we assume Gaussian curvature to be non-positive we
have
$$
  -\br{KVu^f,Vu^f}_{SM_{p,r}} \geq 0.
$$
Thus if we can show that the two boundary terms
tend to zero as $r \to \infty$, it must be
that 
$$
  \lim_{r \to 0}\norm{Xu^f}_{L^2(SM_{p,r})}
  = \lim_{r \to 0}\norm{f}_{L^2(SM_{p,r})}
  = 0
$$
which proves the injectivity.

Using Lemma \ref{derivative estimates}
together with Lemma \ref{volume of sphere}
gives
\begin{align*}
  \left|\br{\br{v,\nu}Vu,X_\perp u}_{\partial SM_{p,r}}\right|
  &\leq \int_{\partial SM_{p,r}} |Vu^f||X_\perp u^f| \, d\Sigma^2 \\
  &\leq C(\eta,K_0) \int_{\partial M_{p,r}} \int_{S_xM} \e^{2(2\sqrt{K_0}-\eta)d_g(x,p)} \, dS \, dV_g \\
  &\leq C(\eta,K_0) \int_{\partial M_{p,r}} \e^{2(2\sqrt{K_0}-\eta)r} \, dV_g \\
  &\leq C(\eta,K_0) \int_{\partial M_{p,r}} \e^{2(2\sqrt{K_0}-\eta)r} \, dV_g \\
  &\leq C(\eta,K_0) \e^{2(2\sqrt{K_0}-\eta)r}\Vol \sphere{p}{r} \\
  &\leq C(\eta,K_0) \e^{(5\sqrt{K_0} - 2\eta)r},
\end{align*}
which indeed tends to zero as $r \to \infty$.

Similarly we obtain
\begin{align*}
  \left| \int_{\partial SM_{p,r}} \br{v_\perp,\nu} (Vu^f)(X u^f) d\Sigma^2 \right|
  &\leq C(\eta,K_0) \e^{(3\sqrt{K_0} - 2\eta)r}.
\end{align*}
which also tends to zero as $r \to \infty$.
\end{proof}

\begin{proof}[Proof of Theorem 2]
The proof is as for the Theorem 1, just using
the other estimates provided by Lemmas
\ref{derivative estimates} and \ref{volume of sphere}.
\end{proof}

\bibliography{refs}{}
\bibliographystyle{alpha}

\end{document}